\numberwithin{theorem}{section}
\numberwithin{lemma}{section}
\numberwithin{proposition}{section}
\numberwithin{definition}{section}
\numberwithin{corollary}{section}
\newtheorem{Ass}{Assumptions}
\newcommand{\R}{\mathbb{R}}
\newcommand{\C}{\mathbb{C}}
\newcommand{\N}{\mathbb{N}}
\newcommand{\psim}{\textrm{psym}}
\renewcommand{\Re}{\mathfrak{Re}}
\newcommand{\Id}{\operatorname{Id}}
\newcommand{\csi}{\langle \xi \rangle}
\newcommand{\cl}{\mathrm{cl}}
\newcommand{\op}{\mathrm{Op}}
\begin{document}

\title{Weyl asymptotics of bisingular operators and
Dirichlet  divisor problem}

\author{Ubertino Battisti}

\institute{Ubertino Battisti, \at
              Universit\`a degli Studi di Torino,\\
               via Carlo Alberto 10, 10123 Torino \\
              Tel.: +39 011 6702877\\
              Fax: +39 011 6702878\\
              \email{ubertino.battisti@unito.it}                 
}

\date{Received: date / Accepted: date}

\maketitle

\begin{abstract}
We consider a class of pseudodifferential operators, with crossed vector valued symbols,
 defined on the product of two closed manifolds. We study the asymptotic expansion of the
counting function of positive selfadjoint operators in this class. Using a general Theorem of 
J. Aramaki, we can determine the first term of the asymptotic expansion of the counting function and, 
in a special case, we are able to find the second term. We give also some examples, emphasizing connections 
with problems of analytic number theory, in particular with Dirichlet divisor function.
\keywords{Weyl's law \and Bisingular operators \and Dirichlet divisor problem \and Spectral analysis}
 \subclass{35P20 \and 58J40  \and 47A10}
\end{abstract}

\section*{Introduction}
\label{intro}
In \cite{RO75} L. Rodino introduced bisingular operators: a class of pseudodifferential operators defined 
on the product of two closed manifolds $M_1 \times M_2$, related to the multiplicative property of  Atiyah-Singer 
index, see \cite{AS68}. A simple example of an operator in this class is the tensorial product
 $A_1 \otimes A_2$, where $A_1$, $A_2$ are pseudodifferential operators on the closed manifolds $M_1$, $M_2$. 
Another example, studied in \cite{RO75}, is the vector-tensor product $A_1 \boxtimes A_2$. In  \cite{NR06}, 
 in order to prove an index formula, F. Nicola and L. Rodino introduced classical, i.e. polyhomogeneous,
 bisingular operators and defined Wodzicki Residue for this class of operators. The two authors defined
 the residue, via holomorphic families, as in \cite{GL02,NI03}. For the index of bisingular operators 
see also the work of V. S. Pilidi \cite{PI73} and of R. V. Dudu{\v{c}}ava \cite{DU79I,DU79II}. In \cite{MR06},
R. Melrose and F. Rochon 
introduced pseudodifferential operators of product type, a class of operators 
close to bisingular operators. Bisingular operators are an example of operators with vector valued symbols; 
pseudodifferential operators of this type have
 been meticulously studied, see, for example, Fedosov, Schulze, Tarkhanov \cite{FST98} and the references therein.

The aim of this paper is to analyze the asymptotic behavior of  the counting function of selfadjoint elliptic
 positive bisingular operators. Similarly to the the case of SG-calculus \cite{BC10} (see e.g. \cite{ES97,SC87} 
for more detail on $SG$-calculus), we use techniques related to complex powers of operators, $\zeta$-function and 
Tauberian Theorems. This strategy, in the setting of closed manifolds, was first used by V. Guillemin \cite{GU85}
 in order to get the so called \emph{soft proof} of Weyl's formula. 

Here, as in the case of $SG$-calculus, it turns out that the $\zeta$-function can have poles of order two. Thus, 
using a refinement of Tauberian Theorem due to J. Aramaki \cite{AR88}, the asymptotic behavior of the 
 counting function is determined. The presence of a pole of order two of the $\zeta$-function implies that the counting
 functions can have asymptotic terms of order $\lambda^c \log \lambda$. Such a behavior appears in various setting:
 manifolds with conical singularities \cite{GL02}, $SG$-calculus on $\R^n$ \cite{NI03}, $SG$-calculus on manifolds with 
cylindrical ends \cite{MP02}. See also  Gramchev, 
Pilipovi{\'c}, Rodino, Wong  \cite{GPRW09,GPRW10} on the asymptotic expansion of the counting function in the case of twisted 
bi-Laplacian. Furthermore, in \cite{MO08},
 S. Moroianu  studied Weyl's law on manifolds
 with cusps, with an approach similar to the one used in this paper. 
In a special case, he  showed that the growth rate of the counting function is $\lambda^c \log \lambda$.

We remark that it is not surprising that the $\zeta$-function of a selfadjoint elliptic positive bisingular operator can have poles 
of order $2$. Indeed, let us consider two positive elliptic pseudodifferential operators $A, B$ defined 
on the closed manifolds $M_1, M_2$. From general theory of complex powers of pseudodifferential operators on closed 
manifolds \cite{SE67}, we know that the $\zeta$-function of an operator $P$ of this type is holomorphic for $\Re(z)<-\frac{n}{m}$
 ($n=\dim\; M$, $m$ order of $P$) and it can be extended as a meromorphic function to the whole of $\C$ with 
poles of order $1$. 
As we noticed at the beginning, the tensorial product $A \otimes B$ is a bisingular operator on $M_1 \times M_2$ and
it is clearly positive and selfadjoint. One can prove the following
\begin{equation}
\label{zetadoppia}
\zeta(A \otimes B, z)= \zeta(A,z) \zeta(B,z).
\end{equation}
If one defines the $\zeta$-function using the eigenvalues, equality \eqref{zetadoppia} becomes more transparent.
To this end, let $\{\lambda_j\}_{j \in \N }$ and $\{\mu_i\}_{i \in \N}$ be the eigenvalues of $A$ and $B$,
respectively. Then the eigenvalues of $A\otimes B$ turn out to be $\{\lambda_j \mu_i\}_{i,j \in \N^2}$. 
Therefore we have
\[
\begin{split}
\zeta(A,z)=&\sum_{j\in \N} \lambda_j^z, \quad \Re(z)<-\frac{n_1}{m_A}; \\
\zeta(B,z)=&\sum_{i\in \N}\mu_i^z, \quad \Re(z)<-\frac{n_2}{m_B};\\
\zeta(A \otimes B, z)= &\sum_{i,j \in \N^2} \lambda_j^z \mu_i^z= \zeta(A,z) \zeta(B,z), \quad \Re(z)<- \max\big\{ \frac{n_1}{m_A}, \frac{n_2}{m_B}\big\};
\end{split}
\]
where $n_1=\dim\; M_1$, $n_2= \dim \;M_2$ and $m_A,m_B$ are the orders of $A$ and $B$. 
Then the product structure of $\zeta(A \otimes B, z)$ implies that it can have poles of order two. 
Let us now focus on the special case $\frac{n_1}{m_A}=\frac{n_2}{m_B}=z_0$:
\begin{equation}
\label{zetaquadra}
\begin{split}
\zeta(A,z)=& \frac{C_A}{(z+z_0)}+ h_A(z), \quad \Re(z)<-z_0+\epsilon;\\
\zeta(B,z)=&\frac{C_B}{(z+z_0)}+ h_B(z), \quad \Re(z)< -z_0+\epsilon;\\
\zeta(A\otimes B,z)=& \frac{C_A C_B}{(z+z_0)^2}+ \frac{h_A(z)+ h_B(z)}{(z+z_0)}+ h_A(z) h_B(z), \quad \Re(z)< -z_0+\epsilon;
\end{split}
\end{equation}
where $C_A, C_B$ are constants that depend just on the principal symbol of $A,B$, while $h_A, h_B$ 
are holomorphic functions which depend on the whole symbol of $A, B$. From \eqref{zetaquadra}, it 
is clear that $\zeta(A \otimes B,z)$ has a pole of order two. Moreover, we observe that the coefficient
 of the pole of order one depends on the whole symbol of $A$ and $B$. Finally, applying
J. Aramaki's Theorem  \ref{aramaki}, from \eqref{zetaquadra} one obtains 
\begin{equation}
\label{aintintr}
N_{A\otimes B}(\lambda)\sim \frac{C_A C_B}{z_0} \lambda^{z_0} \log(\lambda)-
 \left(\frac{h_A(-z_0)- h_B(-z_0)}{z_0} +\frac{C_A C_B}{z_0^2}\right) \lambda^{z_0}+ O(\lambda^{z_0- \delta}),
\end{equation}
where $\delta>0$.
Simple examples of operators $A$ and $B$ for which \eqref{aintintr} holds are
$A= -\Delta_g+ 1$, $B=-\Delta_{g'}+1$, where $\Delta_g$, $\Delta_{g'}$ are the Laplace Beltrami operators
 associated to Riemanniann structures of $M_1$, $M_2$ respectively. We will extend \eqref{aintintr}
 to all positive bisingular elliptic operators, expressing the constants in the Weyl
 asymptotics in terms of the crossed vector-valued symbols. 

The paper is organized as follows. In Section \ref{sec:bis} we shortly recall basic properties of bisingular
 operators; we refer the reader to \cite{NR06,RO75} for more details. 
Section \ref{complexpowers} is devoted to the definition of complex powers of suitable bisingular operators;
 we introduce the $\zeta$-function in this setting and we study its meromorphic extension. 
The main result, concerning the asymptotics of the counting function of selfadjoint elliptic positive bisingular operators, 
is stated in section \ref{Weyl}. In section \ref{example}, we show the connection with  Dirichlet divisor problem, 
which we reconsider from the point of view of Spectral Theory.

\section{Bisingular operators}
\label{sec:bis}
	We start with the definitions of bisingular symbols and bisingular symbols with homogeneous principal symbol.
 In the following, $\Omega_i$ always denotes  a bounded open domain of $\R^{n_i}$.
	\begin{definition}
		We define $S^{m_1, m_2}(\Omega_1, \Omega_2)$ as the set of $C^{\infty}(\Omega_1 \times 				
	\Omega_2 \times \R^{n_1} \times \R^{n_2})$ functions such that, for all multiindex 			
				$\alpha_i, \beta_i$ and for all compact subset $K_i \subseteq \Omega_i$, $i=1,2$, 
there exists a 		
		positive constant $C_{\alpha_1, \alpha_2, \beta_1, \beta_2, K_1, K_2}$  so that 
		\[				
|\partial_{\xi_1}^{\alpha_1}\partial_{\xi_2}^{\alpha_2}\partial_{x_1}^{\beta_1}\partial_{x_2}^{\beta_2} 
a(x_1,x_2, \xi_1, \xi_2)|\leq C_{\alpha_1, \alpha_2, \beta_1, \beta_2, K_1, K_2}\langle \xi_1\rangle^{m_1-|\alpha_1|} 
\langle \xi_2\rangle^{m_2-|\alpha_2|},
		\]
	for all $x_i \in K_i$, $\xi_i \in \R^{n_i}$, $i=1,2$. As usual, $\csi= (1+ |\xi|^2)^{\frac{1}{2}}$.
	\end{definition}
\noindent $S^{-\infty, -\infty}(\Omega_1, \Omega_2)$ is  the set of smoothing symbols. Following \cite{RO75}, 
we introduce the subclass of bisingular operators with homogeneous principal symbol.

\begin{definition}
\label{simpri}
Let $a \in S^{m_1, m_2}(\Omega_1, \Omega_2)$; $a$ has a homogeneous principal symbol if
\begin{itemize}
\item[i)] there exists $a_{m_1, \cdot} (x_1, x_2, \xi_1, \xi_2) \in S^{m_1, m_2}(\Omega_1, \Omega_2)$ such that
\[
\begin{split}
& a(x_1, x_2, t \xi_1, \xi_2)= t^{m_1} a(x_1, x_2, \xi_1, \xi_2), \quad \forall x_1, x_2, \xi_2, \quad \forall |\xi_1|>1, t>0,\\
& a- \psi_1(\xi_1) a_{m_1, \cdot} \in S^{m_1-1, m_2}(\Omega_1, \Omega_2), \quad \psi_1 \;\mbox{cut-off function of the origin}.
\end{split}
\]
Moreover, $a_{m_1, \cdot}(x_1, x_2, \xi_1, D_2) \in L^{m_2}_{\cl}(\Omega_2)$, so, being a classical symbol on $\Omega_2$, 
it admits an asymptotic expansion w.r.t. the $\xi_2$ variable.
\item [ii)]
there exists $a_{\cdot, m_2} (x_1, x_2, \xi_1, \xi_2) \in S^{m_1, m_2}(\Omega_1, \Omega_2)$ such that
\[
\begin{split}
& a(x_1, x_2,  \xi_1, t\xi_2)= t^{m_2} a(x_1, x_2, \xi_1, \xi_2), \quad \forall x_1, x_2, \xi_1, \quad \forall |\xi_2|>1, t>0,\\
&a- \psi_2(\xi_2) a_{\cdot, m_2} \in S^{m_1, m_2-1}(\Omega_1, \Omega_2), \quad \psi_2\; \mbox{cut-off function of the origin}.
\end{split}
\]
Moreover, $a_{\cdot, m_2}(x_1, x_2, D_1, \xi_2) \in L^{m_1}_{\cl}(\Omega_1)$, so,  being a classical symbol on $\Omega_1$,
 it admits an asymptotic expansion w.r.t. the $\xi_1$ variable.
\item [iii)] The symbols $a_{m_1, \cdot}$ and $a_{\cdot, m_2}$
 have the same leading term, so there exists $a_{m_1, m_2}$ such that
\[
\begin{split}
&a_{m_1, \cdot}- \psi_2(\xi_2) a_{m_1, m_2} \in S^{m_1, m_2-1}(\Omega_1, \Omega_2), \\
&a_{\cdot, m_2}-\psi_1(\xi_1) a_{m_1, m_2} \in S^{m_1-1, m_2}(\Omega_1, \Omega_2),
\end{split}
\]
and 
\[
a- \psi_1 a_{m_1, \cdot}- \psi_2a_{\cdot, m_2}+ \psi_1 \psi_2 a_{m_1, m_2} \in S^{m_1-1, m_2-1}(\Omega_1, \Omega_2).
\]
\end{itemize}
The set of symbols with homogeneous principal symbol is denoted as $S^{m_1, m_2}_{\mathrm{pr}}(\Omega_1, \Omega_2)$. We will shortly
  write that the principal symbol of $a$ is $\{a_{m_1, \cdot}, a_{\cdot, m_2}\}$.
\end{definition}
We can observe a similarity, at least formal, between bisingular symbols with homogeneous principal symbol and $SG$-
classical symbols, see, e.g.. \cite{ES97,NI03}. 

We define bisingular operators via their left quantization. A linear operator 
$A: $ $C_c^{\infty}(\Omega_1$ $\times \Omega_2) \to C^{\infty}
(\Omega_1 \times \Omega_2)$ is a bisingular operator if it can be written in the form
\[
\begin{split}
A(u)(x_1, x_2)=&\op(a)(x_1, x_2)
\\=& \frac{1}{(2\pi)^{n_1+ n_2}} \int_{\R^{n_1}}\int_{\R^{n_2}} e^{i x_1 \cdot \xi_1+ i x_2 \cdot\xi_2}a(x_1, x_2, \xi_1, 
\xi_2)\hat{u}(\xi_1, \xi_2)d\xi_1 d\xi_2.
\end{split}
\]
If $a \in S^{m_1, m_2}(\Omega_1, \Omega_2)$ or $a \in S^{m_1, m_2}_{\mathrm{pr}}(\Omega_1, \Omega_2)$,
then we write $A \in L^{m_1, m_2}(\Omega_1, \Omega_2)$ and $A \in L_{\mathrm{pr}}^{m_1, m_2}(\Omega_1, \Omega_2)$  respectively.
The above definition can be extended to the product of closed manifolds; we refer to \cite{RO75} for the details of the construction of global 
operators and the corresponding calculus.

Definition \ref{simpri} implies that, for every operator $A \in L_{\mathrm{pr}}^{m_1, m_2}(\Omega_1, \Omega_2)$, we can define functions 
$\sigma^{m_1}, \sigma^{m_2}, \sigma^{m_1, m_2}$ such that
	\begin{equation}
	\label{isigma}
		\begin{split}
 		\sigma^{m_1}_1(A) : & T^*\Omega_1 \setminus\{0\} \to L^{m_2}_{\cl}(\Omega_2)\\
 								   & (x_1, \xi_1) \mapsto a_{m_1, \cdot}(x_1, x_2, \xi_1, D_2),	\\
		\sigma^{m_2}_2(A): & T^*\Omega_2\setminus \{0\} \to L_{\cl}^{m_1}(\Omega_1)\\
 								   & (x_2, \xi_2) \mapsto a_{\cdot,m_2}(x_1, x_2, D_1, \xi_2),\\
\sigma^{m_1, m_2}(A): & T^* \Omega_1 \setminus \{0\} \times T^*\Omega_2 \setminus \{0\} \to \C \\
 									 & (x_1, x_2, \xi_1, \xi_2) \mapsto a_{m_1, m_2}(x_1, x_2, \xi_1, \xi_2).
 	\end{split}
	\end{equation}		

	Moreover, denoting by $\sigma(P)(x, \xi)$ 
   the principal symbol of a preudodifferential operator $P$ on a closed manifold,
         the following  \emph{compatibility relation} holds
	\begin{equation}
          \begin{split}	
         \label{comppr}
	 \sigma (\sigma^{m_1}_1(A)(x_1,\xi_1))(x_2, \xi_2)=&
         \sigma(\sigma^{m_2}_2(A)(x_2,\xi_2))(x_1, \xi_1)\\
=&\sigma^{m_1, m_2}(A)(x_1, x_2, \xi_1, \xi_2)=a_{m_1, m_2} (x_1, x_2, \xi_1, \xi_2).
         \end{split}	
\end{equation}

\begin{remark}
If we consider the product of closed manifolds $M_1 \times M_2$, then the whole symbol  is a local object, in general.
 Nevertheless, similarly to the calculus on closed manifolds, it is possible to give an invariant meaning to the 
functions \eqref{isigma} as functions defined on the cotangent bundle, see \cite{RO75}. 
\end{remark}	
As in the case of the calculus on closed manifolds,
 it is possible to define adapted Sobolev spaces and then to prove some continuity results.
\begin{definition}
Let $M_1, M_2$ be two closed manifolds. The Sobolev space $H^{m_1, m_2}(M_1\times M_2)$ is defined by
\[
H^{m_1, m_2}(M_1 \times M_2)= \{u \in \mathscr{S}'(M_1 \times M_2) \mid 
\op(\langle \xi_1 \rangle^{m_1} \langle \xi_2\rangle^{m_2} )(u) \in L^2(M_1 \times M_2)\}.
\]
If $u \in H^{m_1, m_2}(M_1 \times M_2)$ then $\|u\|_{m_1, m_2}= \|\op(\langle \xi_1\rangle^{m_1} \langle \xi_2\rangle^{m_2} )(u)\|_2 $.
Using the formalism of tensor product, we can also write\footnote{For definition of $\widehat{\otimes}_\pi$ see \cite{TR67}.}
\[
H^{m_1, m_2}(M_1 \times M_2)= H^{m_1}(M_1) \widehat{\otimes}_\pi H^{m_2}(M_2).
\]
\end{definition}
Similarly to Sobolev spaces $H^s(M)$, we have  
\begin{itemize}
\item[i)] $H^{m_1, m_2}(M_1 \times M_2)\hookrightarrow H^{m_1', m_2'}(M_1 \times M_2)$ is a continuous immersion if $m_i\geq m_i' $, $i=1,2$.
\item[ii)]$H^{m_1, m_2}(M_1 \times M_2)\hookrightarrow H^{m_1', m_2'}(M_1 \times M_2)$ is a compact immersion if $m_i > m_i'$, $i=1,2$.
\end{itemize}
\begin{proposition}
A pseudodifferential operator $A \in L^{m_1, m_2}(M_1 \times M_2)$ can be extended to a continuous operator
\[
A: H^{s, t}(M_1 \times M_2) \to H^{s-m_1, t-m_2}(M_1 \times M_2).
\]
\end{proposition}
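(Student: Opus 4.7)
The plan is to proceed in two steps: first establish $L^2$-boundedness for bisingular operators of order $(0,0)$, then reduce the general case to this one by conjugation with order-reducing Fourier multipliers.

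For the reduction, consider the symbols $\langle \xi_1\rangle^{a}\langle \xi_2\rangle^{b}$, which lie in $S^{a,b}_{\mathrm{pr}}$ for every $a,b\in\R$, and the associated operators
\[
\Lambda^{a,b} := \op(\langle \xi_1\rangle^{a} \langle \xi_2\rangle^{b}) \in L^{a,b}(M_1\times M_2).
\]
By the very definition of the spaces $H^{a',b'}(M_1\times M_2)$, the Fourier multiplier $\Lambda^{a,b}$ is (up to an equivalence of norms) an isomorphism $H^{a',b'}\to H^{a'-a,b'-b}$, with inverse $\Lambda^{-a,-b}$. Hence continuity of $A:H^{s,t}\to H^{s-m_1,t-m_2}$ is equivalent to $L^2$-boundedness of
\[
B := \Lambda^{s-m_1,\, t-m_2} \circ A \circ \Lambda^{-s,\, -t}.
\]
The bisingular symbolic calculus of \cite{RO75} shows that composition is additive in the pair of orders modulo smoothing remainders, so $B$ lies in $L^{0,0}(M_1\times M_2)$ up to a remainder in $L^{-\infty,-\infty}$, and the latter is trivially bounded on $L^2$.

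It therefore suffices to show that every $B=\op(b)\in L^{0,0}(M_1\times M_2)$ extends continuously to $L^2(M_1\times M_2)$. I would treat the symbol as operator-valued: for fixed $(x_1,\xi_1)$ the function $b(x_1,\cdot,\xi_1,\cdot)$ is a classical symbol of order $0$ on $M_2$, and the standard $L^2$-boundedness on the closed manifold $M_2$ gives a uniform bound
\[
\|b(x_1,\cdot,\xi_1,D_2)\|_{\mathcal{B}(L^2(M_2))} \leq C,
\]
together with analogous control for derivatives in $(x_1,\xi_1)$, since differentiation in these variables preserves the bisingular estimates. A partition of unity and local charts on $M_1$ then reduce matters to a pseudodifferential operator on $\R^{n_1}$ with values in $\mathcal{B}(L^2(M_2))$, to which the operator-valued Calder\'on--Vaillancourt theorem applies and yields $L^2$-boundedness of $B$.

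The main obstacle is exactly this last $L^2$-estimate at order $(0,0)$: bisingular symbols only decay in each cotangent fiber separately and do not satisfy the joint H\"ormander estimates required for the scalar Calder\'on--Vaillancourt theorem. The iterated, operator-valued viewpoint, which handles one factor at a time, is the device that circumvents this difficulty; once it is in place, the calculus of \cite{RO75} does all the remaining bookkeeping and the general Sobolev mapping property follows.
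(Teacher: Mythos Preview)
The paper does not actually prove this proposition: it is stated as a standard property of the calculus and implicitly delegated to \cite{RO75}. So there is nothing to compare against, and the relevant question is only whether your argument stands on its own.

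It does, and the two-step scheme (order reduction followed by an $L^2$-estimate at order $(0,0)$ via an operator-valued Calder\'on--Vaillancourt argument) is exactly the natural proof here. Two minor points of precision are worth tightening. First, for a general $b\in S^{0,0}$ the partial symbol $b(x_1,\cdot,\xi_1,\cdot)$ need not be \emph{classical} on $M_2$; classicality in $\xi_2$ is a feature of the subclass $S^{m_1,m_2}_{\mathrm{pr}}$, not of $S^{m_1,m_2}$. This does not matter, since $L^2$-boundedness on $M_2$ only requires the H\"ormander $S^0_{1,0}$ estimates, which the bisingular bounds give you uniformly in $(x_1,\xi_1)$. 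Second, on a closed manifold $\Lambda^{a,b}\circ\Lambda^{-a,-b}$ is not literally the identity but only an elliptic operator of order $(0,0)$ with principal symbol $1$; equivalently, the genuine inverse $(\Lambda^{s,t})^{-1}$ differs from $\Lambda^{-s,-t}$ by a lower-order term. Either way the composite $\Lambda^{s-m_1,t-m_2}\, A\, (\Lambda^{s,t})^{-1}$ still lands in $L^{0,0}$ by the calculus of \cite{RO75}, so your reduction goes through. With these cosmetic fixes the argument is complete.
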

Furthermore, the norm of the operator can be estimated using the seminorms of the symbol. It is also possible to 
prove the following proposition: 
	\begin{proposition}
	\label{propnorm}
		Let $A \in L^{m_1, m_2}(M_1 \times M_2)$ be a bisingular operator; if $m_i \leq0$ ($i=1,2$), then 
there exists $N \in \N$ such that 
		$\|A\|_{0,0}\leq \sup |\sum_{i\leq N }p_i(a(x_1, x_2, \xi_1, \xi_2))|$, where $\{p_i(\cdot)\}_{i \in \N}$ are
 the seminorms of the Fr\'echet space $S^{m_1, m_2}(M_1, M_2)$.
			\end{proposition}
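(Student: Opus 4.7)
The plan is to reduce the statement to the classical Calder\'on--Vaillancourt $L^2$-boundedness theorem, exploiting the fact that when both orders $m_1,m_2$ are non-positive the bisingular symbol class is contained in the Hörmander class $S^0_{0,0}$ of symbols that are bounded with bounded derivatives.

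First I would trivialize the manifold structure via a finite partition of unity $\{\chi_j(x_1,x_2)\}$ on $M_1\times M_2$ subordinate to product charts $U^j_1\times U^j_2\subset \Omega_1^j\times\Omega_2^j$. By standard arguments, it suffices to bound each local piece $\op(\chi_j a)$ acting on $L^2(\Omega_1^j\times\Omega_2^j)$, since the transfer to/from local charts is a bounded operation on $L^2$, and the seminorms of $\chi_j a$ in $S^{m_1,m_2}(\Omega_1^j,\Omega_2^j)$ are controlled by a finite number of seminorms of $a$ in $S^{m_1,m_2}(M_1,M_2)$. Thus I am reduced to proving: if $a\in S^{m_1,m_2}(\Omega_1,\Omega_2)$ is compactly supported in $(x_1,x_2)$ and $m_1,m_2\le 0$, then $\|\op(a)\|_{L^2(\Omega_1\times\Omega_2)\to L^2(\Omega_1\times\Omega_2)}$ is bounded by a finite sum of seminorms $p_i(a)$.

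The key observation is the inequality $\langle\xi_1\rangle^{m_1-|\alpha_1|}\langle\xi_2\rangle^{m_2-|\alpha_2|}\le 1$ for $m_1,m_2\le 0$ and any multi-indices $\alpha_1,\alpha_2$. Consequently, the defining seminorm estimate in the definition of $S^{m_1,m_2}$ yields
\[
\sup_{x_1,x_2,\xi_1,\xi_2}\bigl|\partial_{\xi_1}^{\alpha_1}\partial_{\xi_2}^{\alpha_2}\partial_{x_1}^{\beta_1}\partial_{x_2}^{\beta_2}a(x_1,x_2,\xi_1,\xi_2)\bigr|\le C_{\alpha,\beta,K}\, p_{\alpha,\beta,K}(a),
\]
so that $a$, regarded as a symbol in $(x,\xi)\in\R^{n_1+n_2}\times\R^{n_1+n_2}$, belongs to Hörmander's class $S^0_{0,0}$ with the relevant sup-norms dominated by seminorms of $S^{m_1,m_2}(\Omega_1,\Omega_2)$. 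The Calder\'on--Vaillancourt theorem then provides an integer $N=N(n_1,n_2)$ and a constant $C_N$ such that
\[
\|\op(a)\|_{L^2\to L^2}\le C_N\sum_{|\alpha|+|\beta|\le N}\sup_{x,\xi}\bigl|\partial_\xi^\alpha\partial_x^\beta a(x,\xi)\bigr|\le C_N'\sum_{i\le N'}p_i(a),
\]
for a suitable enumeration $\{p_i\}$ of the seminorms of $S^{m_1,m_2}(M_1,M_2)$, which is the claim.

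There is no real conceptual obstacle here; the essential point is the elementary inclusion $S^{m_1,m_2}\subset S^0_{0,0}$ for $m_1,m_2\le 0$. The only care required is in the partition-of-unity step, where one needs to verify that the coordinate changes and cut-offs contribute only finitely many bisingular seminorms to the estimate; this is standard since the partition is finite and each $\chi_j$ has compact support in its chart. Alternatively, one could argue in a more structural way via an operator-valued Calder\'on--Vaillancourt theorem, viewing $\op(a)$ as a pseudodifferential operator in $(x_2,\xi_2)$ with symbol taking values in $\mathcal{L}(L^2(M_1))$ (which reflects the isomorphism $L^2(M_1\times M_2)\cong L^2(M_1)\widehat{\otimes}L^2(M_2)$), but the direct scalar approach above is the simplest route to the stated seminorm estimate.
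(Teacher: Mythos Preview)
The paper does not actually prove this proposition; it is stated without argument, preceded only by the remark ``It is also possible to prove the following proposition''. So there is no proof in the paper to compare against.

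Your approach is correct and is the standard way to obtain such an estimate. The essential point, which you identify, is that for $m_1,m_2\le 0$ the bisingular symbol estimates
\[
|\partial_{\xi_1}^{\alpha_1}\partial_{\xi_2}^{\alpha_2}\partial_{x_1}^{\beta_1}\partial_{x_2}^{\beta_2}a|\le C_{\alpha,\beta}\,\langle\xi_1\rangle^{m_1-|\alpha_1|}\langle\xi_2\rangle^{m_2-|\alpha_2|}\le C_{\alpha,\beta}
\]
place $a$ in $S^0_{0,0}(\R^{n_1+n_2}\times\R^{n_1+n_2})$ (note that one does \emph{not} get $S^0_{1,0}$ in the joint variable $(\xi_1,\xi_2)$, since a $\xi_1$-derivative yields decay only in $\langle\xi_1\rangle$, not in $\langle(\xi_1,\xi_2)\rangle$; this is why Calder\'on--Vaillancourt, rather than the simpler $S^0_{1,0}$ boundedness, is the right tool). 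The theorem then gives an $L^2$-bound by a finite number of $S^0_{0,0}$ seminorms, each of which is dominated by the corresponding bisingular seminorm. The partition-of-unity reduction to local charts is routine, exactly as you describe. Your alternative remark about the operator-valued route (viewing the operator as an $\mathcal{L}(L^2(M_1))$-valued pseudodifferential operator in $(x_2,\xi_2)$) is also valid and is perhaps closer in spirit to how one would exploit the product structure, but the scalar argument suffices for what is claimed.
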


An operator $A \in L^{m_1, m_2}(M_1 \times M_2)$ 
is elliptic if  $a_{m_1, \cdot}, a_{\cdot, m_2}, a_{m_1, m_2}$, the three components of its principal symbol,
  are invertible in their 
domain of definition. Explicitly: 
\begin{definition}
	\label{defell}
	Let $A \in L_{\mathrm{pr}}^{m_1, m_2}(M_1 \times M_2)$; $A$  is elliptic if
	\begin{itemize}
		\item[i)] $\sigma^{m_1, m_2}(A)(v_1, v_2)\not=0$ for all $(v_1, v_2) \in T^*M_1 \setminus\{0\} \times T^*M_2 \setminus \{0\}$;
		\item[ii)]$\sigma^{m_1}_1(A)(v_1) \in L^{m_2}_\cl(M_2)$ is invertible for all $v_1 \in T^*M_1 \setminus\{0\}$;
		\item[iii)] $\sigma^{m_2}_2(A)(v_2)\in L^{m_1}_\cl(M_1)$ is invertible for all $v_2 \in T^*M_2 \setminus\{0\}$; 
	\end{itemize}
	where $\sigma^{m_1, m_2}(A), \sigma^{m_1}_1(A)$, $\sigma^{m_2}_2(A)$ are as in \eqref{isigma}.
\end{definition}
In \cite{RO75}, it is proved that, if $A$ satisfies  Definition \ref{defell}, then $A$ is a Fredholm operator. This property is a corollary of the following theorem:
\begin{theorem}
\label{parbis}
	Let $A \in L_{\mathrm{pr}}^{m_1, m_2}(M_1 \times M_2)$ be elliptic; then there exists an operator
	$B \in L^{-m_1, -m_2}_{\mathrm{pr}}(M_1 \times M_2)$ such that
	\[
	\begin{split}
	AB= \Id + K_1,\\
	BA= \Id+ K_2,
	\end{split}
	\]
	where $\Id$ is the identity map  and $K_1, K_2$ are compact operators. 
Moreover, the symbol of $B$ is $b=\{\sigma^{m_1}_1(A)^{-1} , \sigma^{m_2}_2(A)^{-1}\}$.
\end{theorem}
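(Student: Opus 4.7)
The plan is to construct $B$ symbolically, following the scheme of the classical Hörmander/Seeley parametrix construction but adapted to the vector-valued character of bisingular symbols. The ellipticity conditions (i)--(iii) of Definition \ref{defell} are of different natures: (i) is scalar ellipticity of a function on $T^*M_1\setminus\{0\}\times T^*M_2\setminus\{0\}$, while (ii) and (iii) assert invertibility, for each $v_1$ or $v_2$, of a classical pseudodifferential operator on a closed manifold.

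First, I would invert each component of the principal symbol. By (i), $\sigma^{m_1,m_2}(A)^{-1}$ is smooth and bihomogeneous of degree $(-m_1,-m_2)$ away from the zero sections. By (ii), for every $v_1\in T^*M_1\setminus\{0\}$, the operator $\sigma_1^{m_1}(A)(v_1)\in L^{m_2}_{\cl}(M_2)$ is elliptic and invertible, so it admits a classical parametrix $Q_1(v_1)\in L^{-m_2}_{\cl}(M_2)$ whose principal symbol is $\sigma^{m_1,m_2}(A)(x_1,x_2,\xi_1,\xi_2)^{-1}$ for $|\xi_2|>1$. A crucial step is to check that $v_1\mapsto Q_1(v_1)$, viewed as a function of $(x_1,\xi_1)$, lies in the symbol class $S^{-m_1}_{\cl}$ with values in $L^{-m_2}_{\cl}(M_2)$; this is obtained by differentiating the identity $\sigma_1^{m_1}(A)(v_1)Q_1(v_1)-\Id\in L^{-\infty}(M_2)$ in $v_1$ and bootstrapping on the Seeley-type construction of $Q_1$. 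An analogous construction yields $Q_2(v_2)\in L^{-m_1}_{\cl}(M_1)$ depending smoothly on $v_2$.

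Next, I would glue $Q_1$ and $Q_2$ into a candidate symbol $b\in S^{-m_1,-m_2}_{\mathrm{pr}}(M_1\times M_2)$. The compatibility relation \eqref{comppr} propagates to the inverses (both scalar principal symbols equal $\sigma^{m_1,m_2}(A)^{-1}$), so one can patch $Q_1$ and $Q_2$ via cutoffs $\psi_1(\xi_1),\psi_2(\xi_2)$ exactly as in condition (iii) of Definition \ref{simpri}; the resulting $b$ defines $B:=\op(b)\in L^{-m_1,-m_2}_{\mathrm{pr}}(M_1\times M_2)$ with vector-valued principal symbol $\{\sigma_1^{m_1}(A)^{-1},\sigma_2^{m_2}(A)^{-1}\}$. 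Applying the bisingular composition formula from \cite{RO75}, the principal components of $AB$ and $BA$ reduce to $\Id$ fiberwise: $\sigma_1^{m_1}(A)Q_1=\Id$ modulo $L^{-1}_{\cl}(M_2)$, and symmetrically. Hence $AB-\Id$ and $BA-\Id$ belong to $L^{-1,-1}(M_1\times M_2)$. By the bisingular Sobolev embedding recalled in Section \ref{sec:bis}, which is compact when both indices strictly decrease, operators in this class are compact on $L^2(M_1\times M_2)$, yielding the required $K_1,K_2$.

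The main obstacle is the bookkeeping of the vector-valued inversion: one must show that the fiberwise parametrix $Q_1(v_1)$ on $M_2$ actually defines a classical pseudodifferential symbol in $(x_1,\xi_1)$ with values in $L^{-m_2}_{\cl}(M_2)$, with seminorm estimates uniform in $v_1$. This is where the framework of operator-valued symbols of \cite{FST98} plays a role, and where one must carefully reconcile the two asymptotic expansions in $\xi_1$ and $\xi_2$ so that the glued symbol $b$ genuinely satisfies all three items of Definition \ref{simpri}; the rest of the argument is then a routine composition-and-remainder calculation.
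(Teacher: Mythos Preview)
Your proposal is correct and follows essentially the same route as the paper: define $B$ by the vector-valued principal symbol $\{\sigma_1^{m_1}(A)^{-1},\sigma_2^{m_2}(A)^{-1}\}$, invoke the bisingular composition rule for principal symbols (the paper's Lemma~\ref{lemcom}) to conclude that $AB-\Id$ and $BA-\Id$ lie in $L^{-1,-1}$, and then use the compact Sobolev embedding. The paper itself gives no detailed proof beyond pointing to Lemma~\ref{lemcom} and \cite{RO75}; your extra discussion of the smooth dependence of the fiberwise inverse $v_1\mapsto\sigma_1^{m_1}(A)(v_1)^{-1}$ and of the gluing via the compatibility relation \eqref{comppr} simply fills in steps that the paper leaves implicit.
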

The proof of Theorem \ref{parbis} is an easy consequence of the global version of the following lemma:
\begin{lemma}
\label{lemcom}
Let $A \in L^{m_1, m_2}(\Omega_1 \times \Omega_2)$ and $B \in L^{m_1', m_2'}(\Omega_1 \times \Omega_2)$, then
\[
\{ (a\circ b)_{m_1+m_1', \cdot}, (a \circ b)_{\cdot, m_2+ m_2'}\}= 
\{ a_{m_1, \cdot}\circ_{\xi_2} b_{m_1', \cdot},  a_{ \cdot, m_2}\circ_{\xi_1}b_{\cdot, m_2'}\}
\]
where 
\[
\begin{split}
&(a\circ_{\xi_1} b)(x_1, x_2, D_1, \xi_2) (u)= a(x_1, x_2, D_1, \xi_2)  \circ b(x_1, x_2, D_1, \xi_2)(u) \quad \forall 
u \in C^\infty_c (\Omega_1),\\
&(a\circ_{\xi_2} b)(x_1, x_2, \xi_1, D_2)(v) = a(x_1, x_2, \xi_1, D_2) \circ b(x_1, x_2, \xi_1, D_2) (v) \quad \forall 
v \in C^\infty_c (\Omega_2).
\end{split}
\]
In first row the composition is in the space $L^{\infty}(\Omega_1)$ of pseudodifferential
 operators on $\Omega_1$, in second row, it is in the space $L^{\infty}(\Omega_2)$.
\end{lemma}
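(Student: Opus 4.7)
My plan is to reduce the claim to the standard composition formula for classical pseudodifferential operators, applied fiber by fiber in one of the two cotangent variables.

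First, I would establish the full asymptotic expansion for the left symbol $a \circ b$ of the composition $AB$ in the bisingular calculus; locally this reads
\[
(a\circ b)(x_1,x_2,\xi_1,\xi_2)\sim \sum_{\alpha_1,\alpha_2}\frac{(-i)^{|\alpha_1|+|\alpha_2|}}{\alpha_1!\,\alpha_2!}\,
\partial_{\xi_1}^{\alpha_1}\partial_{\xi_2}^{\alpha_2} a \cdot \partial_{x_1}^{\alpha_1}\partial_{x_2}^{\alpha_2} b,
\]
where the $(\alpha_1,\alpha_2)$-term lies in $S^{m_1+m_1'-|\alpha_1|,\,m_2+m_2'-|\alpha_2|}(\Omega_1,\Omega_2)$. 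This is a routine adaptation of the standard oscillatory-integral derivation to the bisingular class and is already contained in \cite{RO75}.

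Next, I isolate the top-order part in $\xi_1$: every term with $|\alpha_1|\geq 1$ loses at least one order in $\xi_1$, so modulo $S^{m_1+m_1'-1,\,\cdot}(\Omega_1,\Omega_2)$ we have
\[
(a\circ b)(x_1,x_2,\xi_1,\xi_2)\equiv \sum_{\alpha_2}\frac{(-i)^{|\alpha_2|}}{\alpha_2!}\,\partial_{\xi_2}^{\alpha_2} a \cdot \partial_{x_2}^{\alpha_2} b.
\]
By Definition \ref{simpri}, replacing $a$ by $\psi_1(\xi_1)\,a_{m_1,\cdot}$ and $b$ by $\psi_1(\xi_1)\,b_{m_1',\cdot}$ alters the right-hand side only by a symbol in $S^{m_1+m_1'-1,\,\cdot}$, so the leading $\xi_1$-homogeneous component of $a\circ b$ is represented (for $|\xi_1|>1$) by
\[
\sum_{\alpha_2}\frac{(-i)^{|\alpha_2|}}{\alpha_2!}\,\partial_{\xi_2}^{\alpha_2} a_{m_1,\cdot} \cdot \partial_{x_2}^{\alpha_2} b_{m_1',\cdot}.
\]
I recognize this series as exactly the Leibniz expansion of the composition, in the sense of classical pseudodifferential operators on $\Omega_2$, of the two families
\[
a_{m_1,\cdot}(x_1,x_2,\xi_1,D_2),\qquad b_{m_1',\cdot}(x_1,x_2,\xi_1,D_2),
\]
parameterized by $(x_1,\xi_1)\in T^*\Omega_1\setminus\{0\}$. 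This is precisely $a_{m_1,\cdot}\circ_{\xi_2} b_{m_1',\cdot}$ as defined in the statement, so $(a\circ b)_{m_1+m_1',\,\cdot}=a_{m_1,\cdot}\circ_{\xi_2} b_{m_1',\cdot}$. Swapping the roles of the indices $1$ and $2$ yields the symmetric identity for the second component.

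The main obstacle is the bookkeeping of the remainders: one must verify that the truncation of the double-indexed expansion at $|\alpha_1|=0$ really produces a remainder in $S^{m_1+m_1'-1,\,\cdot}$ with uniform estimates in $\xi_2$, and that the cut-off functions $\psi_j(\xi_j)$ used to promote $a_{m_1,\cdot}$ and $b_{m_1',\cdot}$ to genuine symbols do not spoil the identification (they only affect the region $|\xi_1|\leq 1$, which is already negligible for the top-order $\xi_1$-homogeneous component). As a consistency check one may recompute the doubly-homogeneous leading term from both $\sigma^{m_1+m_1'}_1(AB)$ and $\sigma^{m_2+m_2'}_2(AB)$; both give $a_{m_1,m_2}\,b_{m_1',m_2'}$, in agreement with the compatibility relation \eqref{comppr}.
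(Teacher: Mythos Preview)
The paper does not actually prove Lemma~\ref{lemcom}: it is stated without proof, as a known ingredient from Rodino's original construction of the calculus \cite{RO75}, and is only invoked to deduce Theorem~\ref{parbis}. So there is no ``paper's own proof'' to compare against.

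Your argument is correct and is precisely the expected one: write down the full double-indexed Leibniz asymptotic expansion for $a\circ b$, discard all terms with $|\alpha_1|\geq 1$ (which drop order in $\xi_1$), replace $a,b$ by their $\xi_1$-homogeneous leading parts $a_{m_1,\cdot},b_{m_1',\cdot}$ at the cost of a further $S^{m_1+m_1'-1,\cdot}$ error, and recognize the remaining $\alpha_2$-sum as the standard composition formula in $L^\infty(\Omega_2)$ with $(x_1,\xi_1)$ as parameters. The symmetric argument gives the second component, and the consistency check on the bihomogeneous top symbol is a nice touch. The only caveats you flag (uniformity of the remainder estimates in $\xi_2$, and harmlessness of the cut-offs $\psi_j$) are exactly the points that need attention, and they go through routinely; this is the content of the calculus developed in \cite{RO75}.
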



\section{Complex powers of bisingular operators}
\label{complexpowers}
In this section we define complex powers of a subclass of elliptic bisingular operators.  The first step is to give a suitable definition
$\Lambda$-elliptic operators w.r.t. a sector of the complex plane $\Lambda$.
\begin{definition}
\label{parell}
Let $\Lambda$ be a sector of $\C$; we say that $a \in S^{m_1, m_2}_{\mathrm{pr}}(M_1, M_2)$ is $\Lambda$-elliptic w.r.t. $\Lambda$ if there exists
 a positive constant $R$ such that
\begin{itemize}
		\item[i)] 
		\[
		\big(\sigma^{m_1, m_2}(A)(v_1, v_2) -\lambda\big)^{-1} \in S^{-m_1,-m_2}(M_1, M_2), 
		\]
		for all $|v_i|>R$, $i=1,2$, and for all $\lambda \in \Lambda$.
		\item[ii)]
		\[
		\sigma^{m_1}_1(A)(v_1)-\lambda\; \Id_{M_2} \in L^{m_2}_\cl(M_2),
		\] 
		is invertible for all $|v_1|>R$  and for all $\lambda \in \Lambda$.
		\item[iii)] 
		\[
		\sigma^{m_2}_2(A)(v_2)- \lambda \;\Id_{M_1} \in L^{m_1}_\cl(M_1),
		\] 
		is invertible for all $|v_2|>R$  and for all $\lambda \in \Lambda$. 
\end{itemize}
\end{definition}
In the following, 
in order to define the complex power of $A$, we assume that $\Lambda$ is a sector
 of the complex plane with vertex at the origin, 
that is
\[
\Lambda=\{z \in \C \mid \arg(z) \in [\pi-\theta, -\pi+\theta]\}.
\]

\setlength{\unitlength}{0.4cm}
\begin{picture}(22,12)
\thicklines

\put(11, 6){\line(-2,1){8.2284}}
\put(2.5424, 10.1653){\vector(2,-1){4.5}}
\put(11,6){\vector(-2,-1){5}}
\put(11, 6){\line(-2,-1){8.2284}}

\thinlines
\put(11,6){\vector(0,1){6}}
\put(11,6){\line(0,-1){6}}
\put(11,6){\vector(1,0){11}}
\put(11,6){\line(-1,0){11}}

\put(3.5,10){{arg} $= \pi - \theta$}
\put(3.5,1.6){{arg} $= -\pi + \theta$}
\end{picture}

\begin{lemma}
\label{eqell}
Let $a \in S^{m_1, m_2}(\Omega_1, \Omega_2)$ be $\Lambda$-elliptic. For all $K_i \subseteq \Omega_i$, $i=1,2$, there exist $c_0>1$ and a set
\begin{equation}
\label{lemmatec}
\Omega_{\xi_1, \xi_2}:=\{z \in \C \setminus \Lambda \mid \frac{1}{c_0} \langle \xi_1 \rangle^{m_1} \langle \xi_2 \rangle^{m_2} <|z|< c_0 \langle \xi_1 \rangle^{m_1} \langle \xi_2 \rangle^{m_2}\}
\end{equation}
such that
\[
\begin{split}
\mathrm{spec}(a(x_1,x_2,\xi_1,\xi_2))= \{\lambda\in \C \mid a(x_1,x_2, \xi_1, \xi_2)-\lambda=0 \}&\subseteq \Omega_{\xi_1, \xi_2}, \\
&\forall x_i \in \Omega_i, \xi_i \in \R^{n_i};
\end{split}
\]
moreover,
\[
\begin{split}
& |\big(\lambda-a_{m_1, m_2}(x_1, x_2, \xi_1, \xi_2)\big)^{-1}|\leq C (|\lambda|+ \langle \xi_1 \rangle^{m_1} \langle \xi_2 \rangle^{m_2})^{-1}, \\
& |\big( a_{m_1, \cdot} -\lambda \Id_{\Omega_1} \big)^{-1} |\leq C (|\lambda|+ \langle \xi_1 \rangle^{m_1} \langle \xi_2 \rangle^{m_2})^{-1},\\
& |\big( a_{\cdot, m_2}-\lambda \Id_{\Omega_2}\big)^{-1} |\leq C (|\lambda|+ \langle \xi_1 \rangle^{m_1} \langle \xi_2 \rangle^{m_2})^{-1},\\
& \forall x_i \in K_i, \xi_i \in \R^{n_i}, \lambda \in \C \setminus \Omega_{\xi_1, \xi_2}, i=1,2,
\end{split}
\]
where $\big(a_{m_1, \cdot}-\lambda \Id_{\Omega_{1}}\big)^{-1}$ stands for the symbol of the operator $(a_{m_1, \cdot}(x_1, x_2, \xi_1, D_2)-
 \lambda \Id_{\Omega_1})^{-1}$, and similarly for $\big(a_{\cdot, m_2}- \lambda \Id_{\Omega_2}\big)^{-1}$.
\end{lemma}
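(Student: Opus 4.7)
The plan is to combine three ingredients: the bihomogeneity of the principal parts $a_{m_1,m_2}$, $a_{m_1,\cdot}$, $a_{\cdot,m_2}$ from Definition \ref{simpri}; the invertibility assumptions of Definition \ref{parell}; and a compactness argument in both the base variables $x_i\in K_i$ and the angular variables $\hat\xi_i = \xi_i/|\xi_i|$. The constant $c_0$ and the region $\Omega_{\xi_1,\xi_2}$ will be chosen so that all four assertions (the spectral inclusion and the three resolvent estimates) hold simultaneously.

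First I would establish the scalar bound on $(\lambda-a_{m_1,m_2})^{-1}$. By Definition \ref{simpri}, for $|\xi_i|>1$ one has
\[
a_{m_1,m_2}(x_1,x_2,\xi_1,\xi_2)=|\xi_1|^{m_1}|\xi_2|^{m_2}\, a_{m_1,m_2}(x_1,x_2,\hat\xi_1,\hat\xi_2).
\]
The values on the compact set $K_1\times K_2\times S^{n_1-1}\times S^{n_2-1}$ form a compact subset of $\C\setminus\Lambda$ by Definition \ref{parell}(i), hence they lie in an annulus $\{1/c<|z|<c\}\setminus\Lambda$. Rescaling and passing from $|\xi_i|$ to the equivalent $\langle\xi_i\rangle$ (with the bounded region $|\xi_i|\le 1$ absorbed into an enlarged $c_0$ by compactness, using also the full symbol $a$ itself) fixes $c_0$ and yields the spectral inclusion. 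The resolvent bound for $\lambda\notin\Omega_{\xi_1,\xi_2}$ then follows from a three-case analysis: either $|\lambda|>c_0\langle\xi_1\rangle^{m_1}\langle\xi_2\rangle^{m_2}$, or $|\lambda|<c_0^{-1}\langle\xi_1\rangle^{m_1}\langle\xi_2\rangle^{m_2}$, or $\lambda\in\Lambda$ at comparable modulus. In each case $|\lambda-a_{m_1,m_2}|$ is bounded below by a constant times $|\lambda|+\langle\xi_1\rangle^{m_1}\langle\xi_2\rangle^{m_2}$.

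Next I would treat the operator-valued estimates, say for $(a_{m_1,\cdot}(x_1,x_2,\xi_1,D_2)-\lambda\,\mathrm{Id}_{\Omega_2})^{-1}$; the case of $a_{\cdot,m_2}$ is symmetric. For fixed $(x_1,\xi_1)$ with $|\xi_1|>R$, Definition \ref{parell}(ii) makes this a parameter-dependent classical pseudodifferential operator on $\Omega_2$ whose principal symbol in $\xi_2$ is $a_{m_1,m_2}(x_1,x_2,\xi_1,\xi_2)-\lambda$, already controlled by the first step. Classical parameter-elliptic calculus on closed manifolds (Seeley--Shubin) then supplies a resolvent estimate of the form $C(|\lambda|+\langle\xi_2\rangle^{m_2})^{-1}$. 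To upgrade the denominator to $|\lambda|+\langle\xi_1\rangle^{m_1}\langle\xi_2\rangle^{m_2}$, I would rescale: set $\mu=\langle\xi_1\rangle^{-m_1}\lambda$ and $\tilde a(\xi_2)=\langle\xi_1\rangle^{-m_1}a_{m_1,\cdot}(x_1,x_2,\xi_1,\xi_2)$; by homogeneity and compactness on $|\hat\xi_1|=1$, the family $\tilde a$ is uniformly $\Lambda$-elliptic in $\xi_1$, so the standard estimate applied to $\tilde a-\mu$ and then undone via multiplication by $\langle\xi_1\rangle^{-m_1}$ yields the claim.

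The main obstacle is precisely the uniformity in $\xi_1$ of the operator-valued resolvent estimates. Although parameter-elliptic calculus is standard for each frozen $\xi_1$, one has to verify that only finitely many seminorms of $a_{m_1,\cdot}$, viewed as a smooth family in $\xi_1$, enter the constants, and that these seminorms are controlled uniformly by those of $a\in S^{m_1,m_2}$. The rescaling above together with compactness on $|\hat\xi_1|=1$ handles large $|\xi_1|$; the region $|\xi_1|\le R$ is handled separately by continuity and compactness of $K_1\times\overline{B_R}$, with $a_{m_1,\cdot}-\lambda$ remaining invertible there after possibly shrinking the sector opening or enlarging $c_0$.
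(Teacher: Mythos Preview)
The paper does not actually give a proof of this lemma: immediately after the statement it writes ``The proof of Lemma \ref{eqell} is essentially the same of the one of Lemma 3.5 in \cite{MSS06}'' and moves on. That reference treats the analogous statement for classical $SG$-symbols, where the bihomogeneous structure is in $(x,\xi)$ rather than in $(\xi_1,\xi_2)$; the argument there is precisely the homogeneity-plus-compactness-plus-rescaling scheme you outline. So your proposal is correct and is, in substance, the proof the paper is pointing to, transported from the $SG$ setting to the bisingular one.

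One small remark: in your operator-valued step you invoke ``parameter-elliptic calculus on closed manifolds'' while the lemma is phrased for open domains $\Omega_i$. This is harmless here because Definition \ref{parell} is formulated globally on $M_2$ (resp.\ $M_1$), so the invertibility and resolvent bounds you need are indeed available from the closed-manifold theory; locally one just reads off the symbol estimates. Your identification of the uniformity in $\xi_1$ as the only nontrivial point, and your handling of it via the rescaling $\mu=\langle\xi_1\rangle^{-m_1}\lambda$ combined with compactness on $|\hat\xi_1|=1$, matches exactly how \cite{MSS06} obtains the corresponding uniformity.
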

The proof of Lemma \ref{eqell} is essentially the same of the one of Lemma 3.5 in \cite{MSS06}.

Next, we prove that, if $A$ $\Lambda$-elliptic, then we can define a parametrix of $(A-\lambda \, \Id)$. 
Actually, we  prove that, for $|\lambda|$ large enough, the resolvent $(A-\lambda\, \Id)^{-1}$ exists.
Restricting ourselves to differential operators, we could follow formally the idea of Shubin (\cite{SH87}, ch. II) of parameter depending operators.
 For general pseudodifferential operators, it is well know that this idea does not work, see \cite{GS95}. 

\begin{theorem}
\label{thmpara}
Let $A \in L_{\mathrm{pr}}^{m_1, m_2}(M_1 \times M_2)$ be $\Lambda$-elliptic. Then there exists $R \in \R^+$, such that the resolvent
 $(A-\lambda \Id)^{-1}$ exists for $\lambda \in \Lambda_R=\{\lambda \in \Lambda \mid |\lambda|\geq R\}$. Moreover,
\[
\|(A-\lambda \Id)^{-1}\| = O(|\lambda|^{-1}), \quad \lambda \in \Lambda_R.
\]
\end{theorem}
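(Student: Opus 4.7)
The plan is to build a parametrix $B(\lambda)$ for $A-\lambda\,\Id$ inside the bisingular calculus, show that $B(\lambda)(A-\lambda\,\Id)=\Id+K(\lambda)$ with $\|K(\lambda)\|\to 0$ as $|\lambda|\to\infty$ in $\Lambda$, and then invert via Neumann series.

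First I would use Lemma \ref{eqell} to define a candidate principal parametrix symbol $b_0(x_1,x_2,\xi_1,\xi_2,\lambda)$ by inverting the three components of the principal symbol of $A-\lambda\,\Id$ in the sense of Definition \ref{parell}: the inverse $(a_{m_1,m_2}-\lambda)^{-1}$ for the fully homogeneous part, together with the operator-valued inverses $(\sigma^{m_1}_1(A)(v_1)-\lambda\,\Id_{M_2})^{-1}\in L^{-m_2}_\cl(M_2)$ and $(\sigma^{m_2}_2(A)(v_2)-\lambda\,\Id_{M_1})^{-1}\in L^{-m_1}_\cl(M_1)$, which are available for $|v_i|>R$ by the $\Lambda$-ellipticity hypothesis. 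The compatibility relation \eqref{comppr} ensures that these three pieces are coherent and define a genuine element of $L^{-m_1,-m_2}_{\mathrm{pr}}$.

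Next, following the parametrix construction in Theorem \ref{parbis} (and using the composition rule in Lemma \ref{lemcom}), I would iteratively construct lower-order corrections $b_j(\lambda)$, $j\geq 1$, such that the asymptotic sum $b(\lambda)\sim\sum_j b_j(\lambda)$ gives an operator $B(\lambda)\in L^{-m_1,-m_2}(M_1\times M_2)$ satisfying
\[
B(\lambda)(A-\lambda\,\Id)=\Id + K(\lambda),
\]
with $K(\lambda)\in L^{-1,-1}(M_1\times M_2)$. The crucial point is to track the $\lambda$-dependence: each iteration step should gain not merely a power of $\langle\xi_i\rangle^{-1}$ but also a factor involving $(|\lambda|+\langle\xi_1\rangle^{m_1}\langle\xi_2\rangle^{m_2})^{-1}$ coming from the estimates of Lemma \ref{eqell}. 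As a consequence, the seminorms $p_i$ of the symbol of $K(\lambda)$ in the Fr\'echet topology of $S^{-1,-1}$ will be of order $O(|\lambda|^{-\delta})$ for some $\delta>0$ as $|\lambda|\to\infty$ in $\Lambda$.

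With this in hand, Proposition \ref{propnorm} yields $\|K(\lambda)\|_{0,0}=O(|\lambda|^{-\delta})$, so for $|\lambda|\geq R$ large enough, $\Id+K(\lambda)$ is invertible by Neumann series with $\|(\Id+K(\lambda))^{-1}\|\leq 2$. Then $(A-\lambda\,\Id)^{-1}=(\Id+K(\lambda))^{-1}B(\lambda)$ exists on $L^2(M_1\times M_2)$, and since the leading symbol of $B(\lambda)$ is estimated by Lemma \ref{eqell} as $O((|\lambda|+\langle\xi_1\rangle^{m_1}\langle\xi_2\rangle^{m_2})^{-1})$, another application of Proposition \ref{propnorm} gives $\|B(\lambda)\|_{0,0}=O(|\lambda|^{-1})$, whence $\|(A-\lambda\,\Id)^{-1}\|=O(|\lambda|^{-1})$, as required.

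The main obstacle is exactly the one flagged by the reference to \cite{GS95}: the standard Shubin parameter-dependent calculus does not apply verbatim to general bisingular pseudodifferential operators, because the resolvent parameter $\lambda$ is not a homogeneity variable of the same kind as $\xi_1,\xi_2$. The delicate point is therefore to verify, at each inductive step in the construction of $b_j(\lambda)$, that the corrections inherit the correct joint decay in $(\xi_1,\xi_2,\lambda)$ and, in particular, that composing inverses of the operator-valued components $\sigma^{m_i}_i(A)-\lambda\,\Id$ with lower-order remainders still produces a bisingular symbol whose seminorms can be bounded by the factor $(|\lambda|+\langle\xi_1\rangle^{m_1}\langle\xi_2\rangle^{m_2})^{-1}$. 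Once this bookkeeping is carried out along the lines used in the proof of Lemma \ref{eqell} (modelled on \cite{MSS06}), the Neumann series argument closes the proof.
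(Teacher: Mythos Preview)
Your strategy---principal parametrix from the three inverted symbol components, remainder with small operator norm, Neumann series, then Proposition~\ref{propnorm} for the $O(|\lambda|^{-1})$ bound---is exactly what the paper does. Two points of divergence are worth noting.

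First, the iterative construction of corrections $b_j(\lambda)$, $j\geq 1$, is unnecessary and is in fact the place where the difficulty you flag in your last paragraph would bite. The paper stops at the principal level: with $b(\lambda)$ built only from $(\sigma^{m_1}_1(A)-\lambda\,\Id_{M_2})^{-1}$, $(\sigma^{m_2}_2(A)-\lambda\,\Id_{M_1})^{-1}$ and $(a_{m_1,m_2}-\lambda)^{-1}$, the remainder $R_1(\lambda)$ already lies in $L^{-1,-1}$ and, crucially, satisfies $\lambda R_1(\lambda)\in L^{-1,-1}$ uniformly in $\lambda\in\Lambda$. This is because every term of $r_1(\lambda)$ contains a factor $b(\lambda)$ or a derivative thereof, and each such factor is $O(|\lambda|^{-1})$ by Lemma~\ref{eqell} and the standard theory on closed manifolds for the operator-valued pieces. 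Hence $\|R_1(\lambda)\|=O(|\lambda|^{-1})$ directly (not merely $O(|\lambda|^{-\delta})$), and no asymptotic summation---with its attendant bookkeeping of joint $(\xi_1,\xi_2,\lambda)$ decay---is required. This sidesteps the \cite{GS95} obstruction rather than confronting it.

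Second, you only write the left identity $B(\lambda)(A-\lambda\,\Id)=\Id+K(\lambda)$, and then set $(A-\lambda\,\Id)^{-1}=(\Id+K(\lambda))^{-1}B(\lambda)$. But $(\Id+K(\lambda))^{-1}B(\lambda)$ is only a \emph{left} inverse of $A-\lambda\,\Id$; it gives injectivity, not surjectivity. The paper uses both identities $(A-\lambda)\circ B(\lambda)=\Id+R_1(\lambda)$ and $B(\lambda)\circ(A-\lambda)=\Id+R_2(\lambda)$, with both $R_i$ of norm $O(|\lambda|^{-1})$, to conclude that $A-\lambda\,\Id$ is bijective and then invoke the Open Mapping Theorem. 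This is a trivial fix---the same $B(\lambda)$ works on both sides---but it should be stated.
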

\begin{proof}
First, we look for an inverse of $(A-\lambda \, \Id)$ modulo compact operators, that is an operator $B(\lambda)$ such that:
\begin{equation}
\label{ris}
\begin{split}
(A-\lambda)\circ B(\lambda)&=\Id+ R_1(\lambda), \quad \lambda R_1(\lambda) \in L^{-1, -1}(M_1 \times M_2),\\
B(\lambda)\circ (A-\lambda)&= \Id+R_2(\lambda), \quad \lambda R_2(\lambda) \in L^{-1, -1}(M_1 \times M_2),
\end{split}
\end{equation}
uniformly w.r.t. $\lambda \in \Lambda$. 
In order to find such an operator, we make  the principal symbol explicit:
\[
a-\lambda= \psim(a)-\lambda + c, \quad c \in S^{m_1-1, m_2-1}(M_1, M_2),
\]
where $\psim(a)= \psi_1 a_{m_1, \cdot}+ \psi_2 a_{\cdot, m_2}-\psi_1 \psi_2a_{m_1, m_2} $. 
As we have noticed in Theorem \ref{parbis}, we can write the symbol of the inverse (modulo compact operators) 
of an elliptic operator.
 In this case we need to be more careful because of the parameter $\lambda$. 
Following the same construction as in Theorem \ref{parbis}, we obtain
\begin{equation}
\label{parametrix}
b(\lambda)=\{ \big((\sigma^{m_1}_1(A)-\lambda \,\Id_{M_2} )^{-1}, (\sigma^{m_2}_2(A)-\lambda \,\Id_{M_1})^{-1}\}.
\end{equation}
The above definition \eqref{parametrix} is consistent in view of the $\Lambda$-ellipticity and of the following relation
\[
\begin{split}
\sigma \big((\sigma^{m_1}_1(A)-\lambda \,\Id_{M_2} )^{-1}(x_1, \xi_1)\big) (x_2, \xi_2)&=(a_{m_1, m_2}-\lambda)^{-1}(x_1, x_2, \xi_1, \xi_2),\\
\sigma \big((\sigma^{m_2}_2(A)-\lambda \,\Id_{M_1} )^{-1}(x_2, \xi_2) \big)(x_1, \xi_1)&=(a_{m_1, m_2}-\lambda)^{-1}(x_1, x_2, \xi_1, \xi_2).
\end{split}
\]
Using the rules of the calculus and Lemma \ref{eqell}, we can check that $B(\lambda)$ satisfies conditions
 \eqref{ris}. By  parameter ellipticity, we get that $R_1(\lambda)$ and $R_2(\lambda)$ are compact operators for $\lambda \in \Lambda$,
namely
\begin{equation}
\label{par}
\begin{split}
(A -\lambda \,\Id) \circ B(\lambda)&= \Id + R_1(\lambda), \\
(A -\lambda \,\Id) \circ B(\lambda)&= \Id + R_2(\lambda),
\end{split}
\end{equation}
$\lambda R_1(\lambda), \lambda R_2(\lambda) \in S^{-1, -1}(M_1 \times M_2) $ uniformly w.r.t.  $\lambda \in \Lambda$.
So $B(\lambda)$ is a parametrix and its symbol $b(\lambda)$ has the following form
\[
\begin{split}
&b(\lambda)=-(a_{m_1, m_2}(x_1, x_2, \xi_1, \xi_2)-\lambda)^{-1}\psi_1(\xi_2) \psi_2(\xi_1)\\
&+ (a_{m_1,\cdot}-\lambda\, \Id_{M_2})^{-1}(x_1, x_2, \xi_1, \xi_2) \psi_1(\xi_1) \\
&+(a_{\cdot, m_2} -\lambda\, \Id_{M_1})^{-1} (x_1, x_2, \xi_1, \xi_2) \psi_2(\xi_2),
\end{split}
\]
where $(a_{m_1,\cdot}-\lambda\, \Id_{M_2})^{-1} (x_1, x_2, \xi_1, \xi_2)$ is the value of the symbol
of the operator $(a_{m_1,\cdot}(x_1, x_2, \xi_1, D_2) -\lambda\, \Id_{M_2})^{-1}$ at $(x_2, \xi_2)$, and similarly for 
$(a_{\cdot, m_2} -\lambda\, \Id_{M_1})^{-1}$.
Furthermore, denoting by $r_1(\lambda)$ the symbol of $R_1(\lambda)$, we easily obtain
\begin{equation}
\label{resto}
r_1(\lambda)= (a-\psim(a))\circ b(\lambda)+ (\psim(a) \circ b(\lambda))-1,
\end{equation}
hence $r_1(\lambda) \in S^{-1, -1}(M_1, M_2)$ is the asymptotic sum of terms of the type
\[
\partial_{\xi_1}^{\alpha_1} \partial_{\xi_2}^{\alpha_2} g D_{x_1}^{\alpha_1}D_{x_2}^{\alpha_2} 
b(\lambda)\quad g \in S^{m_1, m_2} (M_1, M_2).
\]
Clearly $(a_{m_1, m_2}(x_1, x_2, \xi_1, \xi_2)-\lambda)^{-1} = O(|\lambda|^{-1})$. By the theory of pseudodifferential
 operators on closed manifolds, the same property
holds for the symbols of the operators $ (a_{m_1,\cdot}(x_1, x_2, \xi_1, D_2)-
\lambda \, \Id_{M_2})^{-1}$ and $(a_{\cdot, m_2}(x_1, x_2, D_1, \xi_2)-\lambda\,  \Id_{M_1})^{-1}$ 
and their derivatives. Thus $r_1(\lambda) = O(|\lambda|^{-1})$, as a consequence of the calculus. By Proposition \ref{propnorm}, 
this implies  $\|R_1\|_{L^2} = O(|\lambda|^{-1})$, and the same is true for
the operator $R_2$. So we can choose $\lambda$ large enough such that
 $R_1, R_2$ have norm less than $1$. In this way, using Neumann series, we prove that $(A-\lambda \,\Id)$ is
one to one and onto, therefore
 invertible, by the Open Map Theorem. Again, by Neumann series, we obtain $\tilde{B}(\lambda)$
 such that \eqref{par} is fulfilled with
 $\tilde{R}_1, \tilde{R}_2$ smoothing and still with norm $O(\lambda^{-1})$. Now notice that
 $\lambda \big[ B(\lambda)- \tilde{B}(\lambda)\big]$ $
 \in S^{-m_1-1, -m_2-1}$ for all $\lambda \in \Lambda$.
Furthermore, if we multiply both equations in \eqref{ris} by $(A-\lambda \Id)^{-1}$ we obtain
\[
(A-\lambda\, \Id)^{-1}=\tilde{B}(\lambda)+ \tilde{B}(\lambda)R_1(\lambda)+R_2(\lambda)(\lambda -A)^{-1}R_1(\lambda).
\]   
Hence $\|(A-\lambda \Id)^{-1}\| = O(|\lambda|^{-1})$ and  $\lambda^2 \big[(A-\lambda)^{-1} - 
\tilde{B}(\lambda)\big]$ 
is a smoothing operator in $L^{-\infty, -\infty}(M_1 \times M_2)$, uniformly w.r.t. $\lambda$.
\end{proof}

In order to define  complex powers of an elliptic bisingular operator, we introduce some natural assumptions.
\begin{Ass}
\label{assu}
\begin{enumerate}
\item $A \in S^{m_1, m_2}(M_1, M_2)$ is $\Lambda$-elliptic.
\item $\sigma(A) \cap \Lambda=\emptyset$ (in particular $A$ is invertible).
\item $A$ has homogeneous principal symbols.
\end{enumerate}
\end{Ass}
\begin{remark}
If we consider a $\Lambda$-elliptic operator $A \in L^{m_1, m_2}_{\mathrm{pr}}(M_1 \times M_2)$ with $m_i >0$ ($i=1,2$),
 then $\sigma(A)$ 
is either discrete or the whole of $\C$, because the resolvent is a compact operator (\cite{SH87}, Ch. I). Since 
by Theorem \ref{thmpara} 
we know that for large $\lambda$ the resolvent is well defined, it turns out that the spectrum $\sigma(A)$ 
is discrete. Then, modulo a shift of the operator, we can find a suitable sector such that Assumptions \ref{assu} 
is fulfilled.
\end{remark}

\begin{definition}
Let $A$ be an operator fulfilling Assumptions \ref{assu}. Then, we can define
\begin{equation}
\label{expdef}
A_z:= \frac{i}{2 \pi} \int_{\partial \Lambda^+_\epsilon} \lambda^z (A-\lambda \, \Id)^{-1} d\lambda, \quad \Re (z)<0,
\end{equation}
where $\Lambda_\epsilon=\Lambda \cup \{z \in \C \mid |z|\leq \epsilon\}$.

\noindent The Dunford integral in \eqref{expdef} is convergent because $\|(A-\lambda \, \Id)^{-1}\| = 
O(|\lambda|^{-1})$ for $\lambda$ large enough. As usual, we next define
\[
A^z:= A_{z-k} \circ A^k, \quad \Re(z-k)<0.
\]
\end{definition}

\begin{remark}
\label{reminv}
In Assumptions \ref{assu} we require $\Lambda \cap \sigma (A)= \emptyset$, therefore in particular
 the operator must be invertible.
 It is  possible  to define complex powers of non invertible operator as well, provided the origin is an isolated point 
of the spectrum, see, e.g., \cite{CSS03}. For example, one can define the complex powers of $A=-\Delta \otimes -\Delta$ on
 the torus $\mathbb{S}^1
 \times \mathbb{S}^1$, even if $A$ has an infinite dimensional kernel. 
\end{remark}

\begin{theorem}
\label{expprincipal}
If $A \in L^{m_1, m_2}(M_1, M_2)$ satisfies Assumptions \ref{assu}, then $A^z \in L^{m_1z, m_2z}(M_1 \times M_2)$ and it has homogeneous 
principal symbol. Moreover, by Cauchy Theorem\footnote{In equation \eqref{zsimb} $ a^z_{m_1z, \cdot}, a^z_{\cdot, m_2 z} , a^z_{m_1z, m_2 z}$ 
represent respectively  $\sigma_1^{m_1z}(A^z), \sigma_2^{m_2z}(A^z), \sigma^{m_1z, m_2z}(A^z)$, while $(a_{m_1, \cdot})^z ,(a_{\cdot, m_2})^z $
 are  complex powers of the operators $\sigma^{m_1}_2(A), \sigma^{m_2}_2(A)$ and 
$(a_{m_1, m_2})^z$ is the complex power of the function $\sigma^{m_1, m_2}(A)$.}
\begin{equation}
\label{zsimb}
\begin{split}
&a^z_{m_1z, m_2z}=(a_{m_1, m_2})^z,\\
&a^z_{m_1z, \cdot}=(a_{m_1, \cdot})^z,\\ 
&a^z_{ \cdot, m_2 z }=(a_{\cdot, m_2} )^z.\\
\end{split}
\end{equation} 

\end{theorem}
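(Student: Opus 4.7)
The plan is to feed the explicit parametrix from Theorem~\ref{thmpara} into the Dunford integral \eqref{expdef} and evaluate each term. By the last lines of the proof of Theorem~\ref{thmpara}, one has the decomposition
\[
(A-\lambda\,\Id)^{-1}=\tilde{B}(\lambda)+r(\lambda),
\]
with $\lambda^{2}\,r(\lambda)\in L^{-\infty,-\infty}(M_{1}\times M_{2})$ uniformly for $\lambda\in\Lambda_{R}$. Substituted into \eqref{expdef} and integrated against $\lambda^{z}$ along $\partial\Lambda_{\epsilon}^{+}$, the piece $r(\lambda)$ yields a smoothing operator thanks to the $|\lambda|^{-2}$ gain and the $\Lambda$-ellipticity estimates of Lemma~\ref{eqell}. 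Hence $A_{z}$ equals $\frac{i}{2\pi}\int_{\partial\Lambda_{\epsilon}^{+}}\lambda^{z}\tilde{B}(\lambda)\,d\lambda$ modulo $L^{-\infty,-\infty}$, and one may replace $\tilde{B}(\lambda)$ by the parametrix symbol $b(\lambda)$ from the proof of Theorem~\ref{thmpara}, again up to a smoothing remainder.

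Next, I would use the explicit splitting
\[
b(\lambda)=-\psi_{1}(\xi_{1})\psi_{2}(\xi_{2})\bigl(a_{m_{1},m_{2}}-\lambda\bigr)^{-1}
+\psi_{1}(\xi_{1})\bigl(a_{m_{1},\cdot}-\lambda\,\Id_{M_{2}}\bigr)^{-1}
+\psi_{2}(\xi_{2})\bigl(a_{\cdot,m_{2}}-\lambda\,\Id_{M_{1}}\bigr)^{-1}
\]
and carry the contour integral inside each summand. The first piece is a scalar Cauchy integral in $\lambda$, so it integrates, on the complement of $\Lambda_{\epsilon}$, to $-\psi_{1}\psi_{2}\,(a_{m_{1},m_{2}})^{z}$. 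The second piece is precisely the Dunford integral defining the Seeley complex power of the classical $\psi$do $a_{m_{1},\cdot}(x_{1},x_{2},\xi_{1},D_{2})\in L_{\cl}^{m_{2}}(M_{2})$, so it produces $\psi_{1}(\xi_{1})(a_{m_{1},\cdot})^{z}\in L_{\cl}^{m_{2}z}(M_{2})$ depending parametrically on $(x_{1},\xi_{1})$; similarly the third yields $\psi_{2}(\xi_{2})(a_{\cdot,m_{2}})^{z}\in L_{\cl}^{m_{1}z}(M_{1})$. Combining, one obtains a bisingular symbol of order $(m_{1}z,m_{2}z)$ whose three principal components are those claimed in \eqref{zsimb}.

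The remaining point is the compatibility relation \eqref{comppr} for $A^{z}$. By Seeley's theorem the principal symbol of $(a_{m_{1},\cdot})^{z}\in L_{\cl}^{m_{2}z}(M_{2})$ is the $z$-th power of the principal symbol of $a_{m_{1},\cdot}(x_{1},x_{2},\xi_{1},D_{2})$, which by the compatibility relation \eqref{comppr} applied to $A$ is $(a_{m_{1},m_{2}})^{z}$; the same holds for $(a_{\cdot,m_{2}})^{z}$. Hence both $(a_{m_{1},\cdot})^{z}$ and $(a_{\cdot,m_{2}})^{z}$ have the same leading term $(a_{m_{1},m_{2}})^{z}$, and the bisingular principal symbol structure of Definition~\ref{simpri} is verified. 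Finally, for $\Re(z)\geq 0$ one uses the defining identity $A^{z}=A_{z-k}\circ A^{k}$ with $k>\Re(z)$ and the bisingular composition rules to transfer the statement to general $z$.

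The step I expect to be the most delicate is controlling the lower order corrections in the calculus of $\tilde{B}(\lambda)$ after integration against $\lambda^{z}$: one must verify that each subprincipal term in the asymptotic expansion of $b(\lambda)$ still yields a symbol of the correct bisingular order after Dunford integration, using the $|\lambda|^{-1}$-type estimates of Lemma~\ref{eqell} together with the uniformity in $(x_{i},\xi_{i})$ of the resolvent bounds. Once this Seeley-type bookkeeping is in place (it is the direct analogue of Seeley's original argument for the scalar calculus, carried out in the bisingular scale), the computation of the principal symbol reduces to the Cauchy-integral identities above.
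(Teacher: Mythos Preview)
Your proposal is correct and follows essentially the same route as the paper: feed the parametrix $\tilde B(\lambda)$ from Theorem~\ref{thmpara} into the Dunford integral, discard the smoothing remainder, split $b(\lambda)$ into its three principal pieces plus a lower-order term, and evaluate each piece via Cauchy's theorem (scalar part) or Seeley's theory on closed manifolds (operator-valued parts). The paper additionally makes explicit the contour deformation to the set $\Omega_{\xi_1,\xi_2}$ of Lemma~\ref{eqell} to read off the order $(m_1z,m_2z)$, while you instead infer the order directly from the known orders of the Seeley powers; conversely, you spell out the compatibility check \eqref{comppr} and the extension to $\Re(z)\geq 0$ via $A^{z}=A_{z-k}\circ A^{k}$, which the paper leaves implicit.
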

\begin{proof}
As a consequence of a general version of Fubini's Theorem, denoting by $a^z$ the symbol of $A^z$, we obtain
\[
a^z = \frac{i}{2\pi} \int_{\partial^+\Lambda_\epsilon} \lambda^z (a-\lambda \Id)^{-1}) d\lambda,
 \quad \Re(z)<0.
\]
where $(a- \lambda \Id)^{-1}$ is the symbol of the operator $(A-\lambda \Id)^{-1}$.
By Theorem \ref{thmpara}, we know that $\lambda^2\Big[ (A-\lambda \Id)^{-1}- B(\lambda) \Big] \in L^{-\infty, -\infty}(M_1 \times M_2)$ so,
 up to smoothing symbols, we have
\begin{equation}
\label{expa}
\begin{split}
a^z=& \frac{i}{2\pi} \int_{\partial^+\Lambda_\epsilon} \lambda^z (\tilde{b}(\lambda)) d\lambda\\
 =&\frac{i}{2\pi} \int_{\Omega_{\xi_1, \xi_2}} \lambda^z (\tilde{b}(\lambda)) d\lambda,
\end{split}
\end{equation}
where $\Omega_{\xi_1, \xi_2}$ is as in Lemma \ref{eqell} and the second equality in \eqref{expa} follows by 
Cauchy integral formula.
Now, by Lemma \ref{eqell} and by the explicit form of $\tilde{b}(\lambda)$, 
we get $A^z \in L^{m_1z, m_2z}(M_1 \times M_2)$. 
In order to show that $A^z$ has homogeneous principal symbol, we write 
\[
\begin{split}
 (\tilde{b}(\lambda))=& 
\psi_1 (\sigma^{m_1}(A)- 
\lambda \Id_{M_2})^{-1}+ \psi_2 (\sigma^{m_2}(A)- \lambda \Id_{M_1})^{-1}\\
&- \psi_1 \psi_2 (\sigma^{m_1, m_2}(A)- 
\lambda)^{-1} + c(\lambda),
\end{split}
\] 
where $\lambda c(\lambda) \in S^{-m_1-1,-m_2-1 }(M_1, M_2)$, $\forall \lambda \in \Lambda$.
 We split integral in \eqref{expa} so that
\begin{eqnarray}
\label{parte1}
a^z=&\frac{i}{2\pi} \int_{\partial^+\Lambda_\epsilon} \lambda^z  \psi_1 (\sigma^{m_1}(A)- \lambda \Id_{M_2})^{-1}\\
\label{parte2}
&+\frac{i}{2\pi} \int_{\partial^+\Lambda_\epsilon} \lambda^z  \psi_2 (\sigma^{m_2}(A)  - \lambda \Id_{M_1})^{-1} d\lambda\\
\label{parte3}
&-\frac{i}{2\pi} \int_{\partial^+\Lambda_\epsilon} \lambda^z \psi_1 \psi_2 (\sigma^{m_1, m_2}(A)- \lambda)^{-1} d\lambda\\
\label{parteresto}
&+\frac{i}{2\pi} \int_{\partial^+\Lambda_\epsilon} \lambda^z c(\lambda) d\lambda.
\end{eqnarray}
The theorem follows from theory of complex powers on closed manifolds for the integrals
\eqref{parte1} and \eqref{parte2}, and from Cauchy Theorem for integral
\eqref{parte3}. Finally, we notice that integral \eqref{parteresto} gives a symbol of order $(m_1 z-1,m_2 z-1 )$.
\end{proof}
We now introduce the function $\zeta(A,z)$ of an elliptic operator that satisfies 
Assumptions \ref{assu}. 
 The proof of the following property is similar to the case of compact
 manifolds (see \cite{SH87}, ch. II).  
\begin{proposition}
\label{zetaauto}
Let $A\in L^{m_1, m_2} (M_1 \times M_2)$, $m_i>0$, $i=1,2$,
 be a selfadjoint operator satisfying Assumptions \ref{assu}. Then we have
\[
A^z(u)= \sum_{i \in \N}\lambda_j^z (f_i, u),
\]
where  $\{\lambda_j\}_{j \in \N}$ is the spectrum of $A$, 
and $\{f_j\}_{j \in \N}$ are the corresponding orthonormal eigenfunctions. We define 
\[
\zeta(A,z):= \sum_{j \in \N} \lambda_j^z, \quad \Re (z) < \min \big\{-\frac{n_1}{m_1}, -\frac{n_2}{m_2}\big\}.
\]

\end{proposition}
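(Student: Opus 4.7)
The plan is to derive both the spectral representation of $A^{z}$ and the convergence of $\zeta(A,z)$ from the interplay between the Dunford integral definition and the spectral theorem, modeled on the compact manifold case in \cite{SH87}.

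First, I would establish that $A$ has compact resolvent and discrete spectrum. By Theorem \ref{thmpara}, for $\mu \in \Lambda$ with $|\mu|$ large, $(A-\mu\,\Id)^{-1}$ maps $L^{2}(M_{1}\times M_{2})$ continuously into $H^{m_{1},m_{2}}(M_{1}\times M_{2})$, which embeds compactly into $L^{2}$ because $m_{1},m_{2}>0$. Combined with the selfadjointness and invertibility from Assumptions \ref{assu}, this yields a complete orthonormal system $\{f_{j}\}_{j\in\N}$ of eigenfunctions with real eigenvalues $\lambda_{j}$ satisfying $|\lambda_{j}|\to\infty$, all lying outside $\Lambda$ by Assumption 2.

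Second, I would identify the Dunford power with the spectral power. Applying \eqref{expdef} to an eigenfunction $f_{j}$ reduces to the scalar contour integral $\frac{i}{2\pi}\int_{\partial\Lambda_{\epsilon}^{+}}\mu^{z}(\lambda_{j}-\mu)^{-1}d\mu = \lambda_{j}^{z}$, evaluated by deforming the contour around $\lambda_{j}$. Since $A^{z}$ is bounded on $L^{2}$ for $\Re(z)<0$ (its symbol being of non-positive order in both variables, by Theorem \ref{expprincipal} and Proposition \ref{propnorm}), termwise action on the expansion $u=\sum_{j}(f_{j},u)f_{j}$ gives
\[
A^{z}u=\sum_{j\in\N}\lambda_{j}^{z}(f_{j},u)f_{j}, \qquad \Re(z)<0,
\]
and the extension to arbitrary $\Re(z)$ via $A^{z}=A_{z-k}\circ A^{k}$ is immediate.

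Third, for $\Re(z)<\min\{-n_{1}/m_{1},-n_{2}/m_{2}\}$, Theorem \ref{expprincipal} gives $A^{z}\in L^{m_{1}z,m_{2}z}(M_{1}\times M_{2})$, and the symbol $a^{z}$ satisfies $|a^{z}(x,\xi)|\lesssim \langle\xi_{1}\rangle^{m_{1}\Re(z)}\langle\xi_{2}\rangle^{m_{2}\Re(z)}$ with both exponents strictly less than $-n_{i}$. Consequently the oscillatory integral defining its Schwartz kernel is absolutely convergent in both frequency variables, so the kernel $K_{z}(x,y)$ is continuous across the diagonal. Hence $A^{z}$ is trace class, and combining the spectral representation with $\tr(A^{z})=\int_{M_{1}\times M_{2}} K_{z}(x,x)\,dx$ yields absolute convergence of the series and
\[
\zeta(A,z)=\sum_{j\in\N}\lambda_{j}^{z}=\tr(A^{z})<\infty.
\]

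The main obstacle is the third step: establishing that the kernel of $A^{z}$ is continuous on the diagonal at exactly the stated threshold. This is the bisingular analogue of the classical "order $<-n$" criterion, and what makes it delicate is the product structure of the symbol estimates, which forces independent conditions $m_{i}\Re(z)<-n_{i}$ on the two frequency integrations. Everything else reduces to the standard spectral-theoretic bookkeeping once this integrability is in hand.
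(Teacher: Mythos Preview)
The paper does not actually prove this proposition; it only remarks that ``the proof of the following property is similar to the case of compact manifolds (see \cite{SH87}, ch.~II).'' Your proposal is precisely that argument, with the bisingular adaptations (compactness of the embedding $H^{m_1,m_2}\hookrightarrow L^2$ for $m_i>0$, and the product-type integrability threshold $m_i\Re(z)<-n_i$) spelled out correctly, so it matches what the paper intends.
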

The definition of $\zeta(A,z)$ in the general case is the following:
\begin{definition}
Let $A\in L^{m_1, m_2}(M_1 \times M_2)$ be an operator satisfying Assumptions \ref{assu}  then
\[
\zeta(A,z):= \int_{M_1 \times M_2} K_{A^z}(x_1,x_2, x_1, x_2)dx_1dx_2, \quad \Re(z)m_1< -n_1, \Re(z)m_2<-n_2,
\]
where $K_{A^z}$ is the kernel of $A^z$. The integral is well defined if 
$\Re(z)m_1< -n_1$ and $ \Re(z)m_2<-n_2$ since, in this case, $A^z$ is trace class.
\end{definition}
\begin{theorem}
\label{exthol}
$K_{A^z}(x_1, x_2,y_1,y_2)$ is a smooth function outside the diagonal. 
Furthermore, $K_{A^z}(x_1, x_2, x_1, x_2)$ restricted to the diagonal 
 can be extended as a meromorphic function on the half plane $\{z \in \C \mid \Re (z)< \min\{- \frac{n_1}{m_1}, 
-\frac{n_2}{m_2}\} + \epsilon \}$ with, at most, poles  at the point $z_{\mathrm{pole}}=\min\{- \frac{n_1}{m_1}, -\frac{n_2}{m_2}\}$. The pole can be 
of order two if $\frac{n_1}{m_1}= \frac{n_2}{m_2}$, otherwise it is a simple pole.
\end{theorem}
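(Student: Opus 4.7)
The plan is to start from the oscillatory integral representation
\[
K_{A^z}(x,x) \;=\; \frac{1}{(2\pi)^{n_1+n_2}}\int_{\R^{n_1}\times \R^{n_2}} a^z(x_1,x_2,\xi_1,\xi_2)\,d\xi_1 d\xi_2,
\]
valid for $\Re(z)m_1<-n_1$ and $\Re(z)m_2<-n_2$, and then to split $a^z$ according to the decomposition already produced in the proof of Theorem \ref{expprincipal}. Off the diagonal the smoothness claim is the standard pseudolocality statement: repeated integration by parts in $\xi_1$ and $\xi_2$ on the kernel $\int e^{i(x-y)\xi}a^z(x,\xi)\,d\xi$ gains arbitrary regularity when $x\neq y$, and this works for bisingular operators exactly as on closed manifolds.

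For the diagonal, using the proof of Theorem \ref{expprincipal} I would write, modulo a remainder of order $(m_1z-1,m_2z-1)$ (which produces a function holomorphic on a larger half-plane),
\[
a^z \;=\; \psi_1\,(a_{m_1,\cdot})^z \;+\; \psi_2\,(a_{\cdot,m_2})^z \;-\; \psi_1\psi_2\,(a_{m_1,m_2})^z.
\]
The purely scalar contribution is the simplest: for large $|\xi_1|,|\xi_2|$ the symbol $(a_{m_1,m_2})^z$ is homogeneous of degree $m_1z$ in $\xi_1$ and $m_2z$ in $\xi_2$, so passing to polar coordinates $\xi_i=r_i\omega_i$, $d\xi_i=r_i^{n_i-1}dr_i d\omega_i$, yields
\[
\int_{|\xi_1|,|\xi_2|\geq 1}\psi_1\psi_2(a_{m_1,m_2})^z\,d\xi \;=\; \frac{c(x,z)}{(m_1z+n_1)(m_2z+n_2)},
\]
with $c(x,z)$ holomorphic. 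This alone already exhibits simple poles at $z=-n_1/m_1$ and $z=-n_2/m_2$ which merge into a pole of order two when $n_1/m_1=n_2/m_2$.

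For the mixed pieces, say $\psi_1(a_{m_1,\cdot})^z$, I would freeze $\xi_1\neq 0$ and apply Seeley's theorem on the closed manifold $M_2$ to the operator $\sigma_1^{m_1}(A)(x_1,\xi_1)\in L_{\cl}^{m_2}(M_2)$ whose complex power has symbol $(a_{m_1,\cdot})^z$. The resulting inner integral
\[
t(x_1,x_2,\xi_1,z) \;=\; \int_{\R^{n_2}} (a_{m_1,\cdot})^z(x_1,x_2,\xi_1,\xi_2)\,d\xi_2
\]
extends meromorphically in $z$ with at most a simple pole at $z=-n_2/m_2$. Because $\sigma_1^{m_1}(A)(x_1,\xi_1)$ is homogeneous of degree $m_1$ in $\xi_1$, its $z$-th power is homogeneous of degree $m_1z$ in $\xi_1$, and consequently so is $t$; therefore polar coordinates in $\xi_1$ against $\psi_1$ produce a factor $(m_1z+n_1)^{-1}$, contributing a further pole at $z=-n_1/m_1$. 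The second mixed piece is symmetric. Summing the three main contributions and the entire remainder, the only possible poles within the asserted half-plane lie at $z=-n_1/m_1$ and $z=-n_2/m_2$; they are simple when distinct, and they superimpose into a pole of order two precisely when $n_1/m_1=n_2/m_2$.

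The main obstacle I anticipate is the parametric version of Seeley's theorem: one needs the meromorphic extension of the diagonal kernel of $(\sigma_1^{m_1}(A)(x_1,\xi_1))^z$ to depend smoothly and homogeneously on $\xi_1$, together with uniform resolvent estimates in $\lambda$ and $\xi_1$ large enough to justify interchanging the Dunford integral with integration in $\xi_2$ and the contour shifts. This is not automatic from the closed-manifold statement but can be obtained either by re-running Seeley's construction with $\xi_1$ as a smooth parameter (using Lemma \ref{eqell} for the joint estimates on the resolvent of $a_{m_1,\cdot}-\lambda$), or by invoking a holomorphic-families version of the calculus; once this parametric control is secured, the homogeneity of the residues and hence the order of each pole follow directly.
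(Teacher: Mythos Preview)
Your approach is correct but differs from the paper's, which is more elementary and avoids the parametric Seeley machinery you flag as the main obstacle. In the unequal case, say $\frac{n_1}{m_1}>\frac{n_2}{m_2}$, the paper does \emph{not} use the full three-term decomposition: it only writes $a^z=a^z_{m_1z,\cdot}+a^z_r$ with $a^z_r\in S^{m_1z-1,m_2z}$, passes to polar coordinates in $\xi_1$ alone, and observes that the remaining $\xi_2$-integral $\int_{\R^{n_2}}a^z_{m_1z,\cdot}\,d\xi_2$ converges outright on the half-plane $\Re(z)<-\frac{n_1}{m_1}+\epsilon$ (since there $m_2\Re(z)<-n_2$), so no meromorphic continuation in the $\xi_2$-direction is needed. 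In the equal case the paper again sidesteps Seeley: after the three-term split it partitions $\R^{n_1}\times\R^{n_2}$ into the four regions $\{|\xi_1|\lessgtr\tau\}\times\{|\xi_2|\lessgtr\tau\}$ and evaluates each piece by bihomogeneity and the fact that $a^z_{m_1z,\cdot}-a^z_{m_1z,m_2z}$ has lower order in $\xi_2$ (so the integral over $|\xi_2|>\tau$ converges), producing the double-pole factor $\frac{1}{(m_1z+n_1)(m_2z+n_2)}$ and explicit simple-pole terms by hand. Your route is cleaner and treats all cases uniformly, at the price of the parametric-in-$\xi_1$ version of Seeley's theorem; the paper's route is more ad hoc but needs nothing beyond homogeneity and elementary integrals.
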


\begin{proof}
By definition, the kernel of $A^z$ has the form
\begin{equation}
\label{kernel}
K_{A^z}(x_1, x_2, x_1, x_2)= \frac{1}{(2\pi)^{n_1+ n_2}}\int_{\R^{n_1}}\int_{\R^{n_2}} a^z(x_1, x_2, \xi_1, \xi_2)d\xi_1 d\xi_2.
\end{equation}
First, let us consider the case $\frac{n_1}{m_1}> \frac{n_2}{m_2}$. Then,  if $\Re(z)< 
-\frac{n_1}{m_1}$, $A^z \in L^{m_1 z, m_2 z} (M_1\times M_2)\subseteq L^{-n_1-\epsilon, -n_2-\epsilon}(M_1\times M_2)$; 
hence it is trace class 
and the integral of the kernel is finite. 
We can write $a^z= a^z_{m_1z, \cdot}+ a^z_{r}$, $a^z_{r}\in S^{m_1 z-1, m_2z}(M_1, M_2)$ and we have then
\begin{equation}
\label{ker1}
\begin{split}
K_{A^z}(x,x)=  &\frac{1}{(2\pi)^{n_1+ n_2}}\int_{\R^{n_2}} \int_{|\xi_1| \geq 1}  \big(a^z_{m_1z, \cdot} + a^{z}_{r, \cdot} \big) d\xi_1 d\xi_2  \\
+&\frac{1}{(2\pi)^{n_1+ n_2}} \int_{\R^{n_2}} \int_{|\xi_1|\leq 1}\big(a^z_{m_1z, \cdot} + a^{z}_{r, \cdot} \big)  d\xi_1 d\xi_2.
\end{split}
\end{equation}
The second integral in \eqref{ker1} is an holomorphic function for $\Re(z) \leq -\frac{n_1}{m_1}+ \epsilon$ since we integrate w.r.t. the $\xi_1$
 variable on a compact set. The same conclusion holds for the integral of $a^z_{r, \cdot}$ on the set $\{(\xi_1, \xi_2)\mid |\xi_1|\geq1, \xi_2 \in 
\R^{n_2}\}$ because it has order $(m_1z-1, m_2 z)$. In order to analyze the integral of $a^z_{m_1, \cdot}$, we switch to polar coordinates and we obtain
\begin{equation}
\label{ker2}
\int_{\R^{n_2}}\int_{|\xi_1|\geq 1} a^z_{m_1z, \cdot} d\xi_1 d\xi_2 =-
 \frac{1}{m_1z+n_1} \int_{\R^{n_2}}\int_{\mathbb{S}^{n_1-1}} a_{m_1z, \cdot} d\theta_1 d\xi_2.
\end{equation}
Clearly \eqref{ker2} can be extended as a meromorphic function on $\{z \in \C \mid \Re(z)< -\frac{n_1}{m_1}+ \epsilon\}$, and, moreover, 
by \eqref{zsimb}, we get 
\[
\lim_{z \to -\frac{n_1}{m_1}} \left( z+ \frac{n_1}{m_1}\right) K_{A^z} (x_1, x_2)= - \frac{1}{(2\pi)^{n_1+ n_2} m_1} \int_{\R^{n_2}}\int_{\mathbb{S}^{n_1-1}} 
a_{m_1, \cdot}^{-\frac{n_1}{m_1}} d\theta_1 d\xi_2.
\]
The case $\frac{n_1}{m_1}<\frac{n_2}{m_2}$ is equivalent, by exchanging $m_1$ and $m_2$. 

The case $\frac{n_1}{m_1}=\frac{n_2}{m_2}$ is a bit more delicate, since we have to analyze the 
whole principal symbol. First we write
\begin{equation}
\label{kerdif}
\begin{split}
K_{A^z}(x,x)=\frac{1}{(2\pi)^{n_1+ n_2}} \int_{\R^{n_1}}\int_{\R^{n_2}} &\big(a^z_{m_1z, \cdot} + a^{z}_{\cdot, m_2z}- a^z_{m_1z, m_2z} \big)+ \\
& \big(a^z- a^z_{m_1z, \cdot}- a^z_{\cdot, m_2z}+ a^z_{m_1z, m_2 z} \big) d\xi_1 d\xi_2.
\end{split}
\end{equation}
The definition of principal symbol implies that the second term in \eqref{kerdif} belongs to $S^{m_1z-1, m_2z-1}(M_1, M_2)$, hence the second integral 
is well defined for $\Re(z)<-\frac{n_1}{m_1}+ \epsilon$ and holomorphic for $\Re(z)< -\frac{n_1}{m_1}+ \epsilon$. Now we have 
to analyze the integral of the principal symbol. Splitting $\R^{n_1}\times \R^{n_2}$ into the following four regions
\[
\begin{split}
\{(\xi_1, \xi_2)\mid |\xi_1|<\tau, |\xi_2|<\tau\}, &\quad \{(\xi_2, \xi_2)\mid |\xi_1|\leq \tau, |\xi_2|\geq \tau\},\\
\{(\xi_1, \xi_2)\mid |\xi_1|\geq\tau, |\xi_2|\leq \tau\}, &\quad \{(\xi_2, \xi_2)\mid |\xi_1|>\tau, |\xi_2|> \tau\},
\end{split}
\]
 one gets
\begin{equation}
\label{am1}
\begin{split}
&\int_{\R^{n_1}}\int_{\R^{n_2}}\left( a^z_{m_1z, \cdot}+ a^{z}_{\cdot,m_2z}- a^z_{m_1 z, m_2 z} \right) d\xi_1 d\xi_2=\\
&\frac{\tau^{(m_1 +m_2)z+ n_1+ n_2}}{(m_1z+ n_1)(m_2 z+ n_2)} \int_{\mathbb{S}^{n_1-1}} \int_{\mathbb{S}^{n_2-1}} a^z_{m_1z, m_2z} d\theta_1 d\theta_2\\
&- \frac{\tau^{m_1z + n_1}}{(m_1z+ n_1)}\int_{|\xi_2|\leq \tau}\int_{\mathbb{S}^{n_1-1}}a^z_{m_1z, \cdot}d\theta_1d\xi_2\\
&- \frac{\tau^{m_2z + n_2}}{(m_2z+ n_2)}\int_{|\xi_1|\leq \tau}\int_{\mathbb{S}^{n_2-1}} a^{z}_{ \cdot,m_2z}d\theta_1d\xi_1\\
&- \frac{\tau^{m_1z + n_1}}{(m_1z+ n_1)}\int_{|\xi_2|> \tau}\int_{\mathbb{S}^{n_1-1}} \big( a^{z}_{m_1z,\cdot}- a^z_{m_1z, m_2z}\big) d\theta_1d\xi_1\\
&- \frac{\tau^{m_2z + n_2}}{(m_2z+ n_2)}\int_{|\xi_1|>\tau}\int_{\mathbb{S}^{n_2-1}} \big( a^{z}_{ \cdot,m_2z}- a^z_{m_1z, m_2z}\big) d\theta_1d\xi_1\\
&+ h(z),
\end{split}
\end{equation}
where $h(z)$ is an holomorphic function for $\Re(z)\leq z_{\mathrm{pole}}+\epsilon$. The evaluation of the integrals in \eqref{am1} are similar to Proposition 3.3 in 
 \cite{NI03}, and Theorem 2.2 in \cite{BC10}. This concludes the proof. 
\end{proof}
Since $M_1, M_2$ are closed manifolds, Theorem \ref{exthol} implies the following:
\begin{corollary}
Let $A \in L^{m_1, m_2}(M_1 \times M_2)$ be an operator satisfying Assumptions \ref{assu}; then $\zeta(A, z)$
 is holomorphic for $\Re(z)< \min
\{- \frac{n_1}{m_1}, -\frac{n_2}{m_2}\}$
 and can be extended as a meromorphic function on the half plane $\Re(z) <\min \{- \frac{n_1}{m_1}, -
\frac{n_2}{m_2}\}+ \epsilon$. 
Moreover, the Laurent coefficients of $\zeta(A,z)$ at $z=z_{\mathrm{pole}}= \min \{ - \frac{n_1}{m_1}, -\frac{n_2}{m_2}\}$ are
\begin{equation}
\label{m1>m2}
 \lim_{z\to-\frac{n_1}{m_1}} \left(z+\frac{n_1}{m_1}\right) \zeta(A,z)= -\frac{1}{(2\pi)^{n_1+ n_2} m_1} \iint_{M_1 \times M_2}\int_{\R^{n_2}} 
\int_{\mathbb{S}^{n_1-1}}a_{m_1, \cdot}^{-\frac{n_1}{m_1}}d\theta_1 d\xi_2,
\end{equation}
if  $\frac{n_1}{m_1}>\frac{n_2}{m_2}$ .
\begin{equation}
\label{m2>m1}
  \lim_{z\to-\frac{n_2}{m_2}} \left(z+\frac{n_2}{m_2}\right) \zeta(A,z)= -\frac{1}{(2\pi)^{n_1+ n_2} m_2} \iint_{M_1 \times M_2}\int_{\R^{n_1}} \int
_{\mathbb{S}^{n_2-1}} a_{\cdot, m_2}^{-\frac{n_2}{m_2}}d\theta_2 d\xi_1, 
\end{equation}
if $\frac{n_2}{m_2}>\frac{n_1}{m_1}$.
\begin{equation}
\begin{split}
 \label{m2=m12}
 res^2(A)=&\lim_{z \to -l} (z+l)^2 \zeta(A, z) =\\
 &\frac{1}{(2\pi)^{n_1+ n_2} (m_1 m_2)}\iint_{M_1 \times M_2}\int_{\mathbb{S}^{n_1-1}}
\int_{\mathbb{S}^{n_2-1}}  (a_{m_1, m_2})^{-l} d\theta d\theta',\\
\end{split}
\end{equation}
\begin{equation}
\label{m2=m11}
 \lim_{z\to - l} (z+l) \big(\zeta(A,z)- \frac{res^2(A)}{(z+l)^2} \big)= -TR_{1,2}(A)+ TR_\theta(A),
\end{equation}
where
\begin{equation}
\label{trxy}
\begin{split}
&TR_{1,2}(A):=\\
 &\frac{1}{(2\pi)^{n_1+ n_2}}\lim_{\tau \to \infty}\big(\frac{1}{m_1} \iint_{M_1 \times M_2}\int_{|\xi_2|\leq \tau}\int_{\mathbb{S}^{n_1-1}}
(a_{m_1, \cdot})^{-l}- res^2(A) \log \tau\big)\\
+&\frac{1}{(2\pi)^{n_1+ n_2}}\lim_{\tau \to \infty}\big(\frac{1}{m_2} \iint_{M_1 \times M_2}\int_{|\xi_1|\leq \tau}\int_{\mathbb{S}^{n_2-1}}
(a_{\cdot,m_2})^{-l}- res^2(A) \log \tau\big) 
\end{split}
\end{equation}
and
\begin{equation}
\label{angular}
TR_\theta(A):=\frac{1}{(2\pi)^{n_1+ n_2} (m_1 m_2)}\int_{M_1 \times M_2}\int_{\mathbb{S}^{n_1-1}}\int_{\mathbb{S}^{n_2-1}} a_{m_1, m_2}^{-l}
\log a_{m_1, m_2} d\theta_1 d\theta_2,
\end{equation}
if $\frac{n_1}{m_1}=\frac{n_2}{m_2}=l$.
\end{corollary}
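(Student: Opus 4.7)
The plan is to deduce the corollary directly from Theorem~\ref{exthol} by integrating the diagonal kernel over the compact base $M_1 \times M_2$. The starting point is the identity
\[
\zeta(A,z) = \iint_{M_1 \times M_2} K_{A^z}(x_1,x_2,x_1,x_2)\, dx_1 dx_2,
\]
which holds by definition in the half-plane of absolute convergence $\Re(z) < \min\{-n_1/m_1, -n_2/m_2\}$. Since $M_1$ and $M_2$ are compact and the meromorphic extension produced in Theorem~\ref{exthol} is given by explicit fiber integrals whose constants depend continuously (indeed smoothly) on $(x_1,x_2)$, we can exchange the $x$-integration with the $z$-extension. The orders of the poles are therefore preserved.

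For the asymmetric cases $n_1/m_1 > n_2/m_2$ (respectively $n_2/m_2 > n_1/m_1$), the decomposition (\ref{ker1})--(\ref{ker2}) localizes the only singularity in the factor $-1/(m_1 z + n_1)$ (resp. $-1/(m_2 z + n_2)$), and the residue on the diagonal is the angular integral of $a_{m_1, \cdot}^{-n_1/m_1}$ (resp. $a_{\cdot, m_2}^{-n_2/m_2}$). Integrating this residue over $M_1 \times M_2$ yields (\ref{m1>m2}) and, by symmetry, (\ref{m2>m1}).

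The substantive case is $n_1/m_1 = n_2/m_2 = l$. Using the splitting (\ref{kerdif}), the four-term remainder $a^z - a^z_{m_1z,\cdot} - a^z_{\cdot, m_2 z} + a^z_{m_1 z, m_2 z}$ lies in $S^{m_1 z-1, m_2 z-1}$ and gives a contribution that is holomorphic up to $\Re(z) < -l + \epsilon$. For the principal-symbol part, I would apply the four-region splitting (\ref{am1}). The double pole originates from the single term
\[
\frac{\tau^{(m_1+m_2)z + n_1 + n_2}}{(m_1 z + n_1)(m_2 z + n_2)} \iint_{\mathbb{S}^{n_1-1}\times \mathbb{S}^{n_2-1}} a^z_{m_1 z, m_2 z}\, d\theta_1 d\theta_2,
\]
and its coefficient of $(z+l)^{-2}$ at $z = -l$ produces (\ref{m2=m12}) after integrating over $M_1 \times M_2$. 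To read off the simple-pole coefficient, I would Taylor-expand at $z = -l$, using $\tau^{m_i z + n_i} = 1 + (m_i z + n_i)\log\tau + O((m_i z + n_i)^2)$ and $a^z_{m_1 z, m_2 z} = a_{m_1, m_2}^{-l}(1 + (z+l)\log a_{m_1, m_2} + \cdots)$. The $\log a_{m_1, m_2}$ term in the cross-product provides $TR_\theta(A)$ from (\ref{angular}); the $\log \tau$ terms combine with the $\tau$-dependent integrals over $\{|\xi_i| \leq \tau\}$ in (\ref{am1}), and after the subtraction of $\mathrm{res}^2(A)\log\tau$ the $\tau \to \infty$ limit gives the finite $TR_{1,2}(A)$ of (\ref{trxy}), with the overall sign $-TR_{1,2}(A) + TR_\theta(A)$ as claimed.

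The main obstacle is the bookkeeping in this final step: one must show that the logarithmic divergences in the single-variable integrals over $|\xi_i| \leq \tau$ cancel exactly those coming from the Taylor expansion of $\tau^{(m_1 + m_2)z + n_1 + n_2}/[(m_1 z + n_1)(m_2 z + n_2)]$, so that the $\tau$-limit defining $TR_{1,2}(A)$ exists and is independent of the cutoff. Once this cancellation is verified, the splitting of the simple-pole Laurent coefficient into the sum $-TR_{1,2}(A) + TR_\theta(A)$ follows mechanically. The rest is a careful combination of the analogous computations in \cite{NI03} and \cite{BC10} referred to at the end of the proof of Theorem~\ref{exthol}.
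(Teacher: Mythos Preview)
Your proposal is correct and follows essentially the same route as the paper: integrate the pointwise statement of Theorem~\ref{exthol} over the compact base, and in the equal-ratio case extract the Laurent coefficients from the decomposition~\eqref{am1}, passing to $\tau\to\infty$. The only refinement the paper makes explicit, which you leave implicit in your ``cancellation of logarithmic divergences'' paragraph, is the observation that the left-hand side of~\eqref{am1} is independent of $\tau$, hence so are all Laurent coefficients of the right-hand side; this immediately justifies computing them in the limit $\tau\to\infty$, where the fourth and fifth integrals in~\eqref{am1} (over $|\xi_2|>\tau$ and $|\xi_1|>\tau$) vanish by dominated convergence, leaving precisely the terms defining $TR_{1,2}(A)$ and $TR_\theta(A)$.
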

In \eqref{trxy}, $(a_{m_1, \cdot})^l$ and $(a_{\cdot, m_2})^l$ are the symbols of 
the complex powers of the operators
 $a_{m_1, \cdot}(x_1, x_2, \xi_1, D_2)$ and $a_{\cdot, m_2}(x_1, x_2, D_1, \xi_2)$.
In order to obtain the terms in \eqref{m2=m11}, \eqref{trxy}, \eqref{angular}, we notice 
that the constant $\tau$ in \eqref{am1}
 is arbitrary and the Laurent coefficients clearly do not change if we change the partition of $\R^{n_1} \times \R^{n_2}$, therefore we can let $\tau$ tend 
to infinity. In this way both the fourth and fifth integral in \eqref{am1} vanish, due to the 
continuity of the integral w.r.t. 
the domain of integration. 
The evaluation is similar to the proof of Theorem 2.9 in \cite{BC10} and of Proposition 3.3 in \cite{NI03}.

\section{Weyl's formula for bisingular operators}
\label{Weyl}
In this section  we study Weyl's formula for positive selfadjoint bisingular operators that satisfy Assumptions \ref{assu}.
In the sequel we use the following Theorem, proved by J. Aramaki \cite{AR88}:
\begin{theorem}
\label{aramaki}
Let $P$ be a positive selfadjoint operator satisfying Assumptions \ref{assu}. If $\zeta(P,z)$ has the first left pole at the point
 $-z_0$ and\footnote{The Aramaki's Theorem actually requires another assumption on the decay of $\Gamma(z) \zeta(P,z)$
 on vertical strips. 
In this case such condition is fulfilled, in view of the relationship between $\zeta$-function,
 heat trace and gamma function, see \cite{GS96,MSS06b}.}
\[
\zeta(P,z)+ \sum_{j=1}^p \frac{A_j}{(j-1)!} \left(\frac{d}{dz}\right)^{j-1} \frac{1}{z+z_0},
\]
extends to an holomorphic function on the half plane $\{z \in \C \mid \Re(z)< -z_0+ \epsilon\}$, then, setting
\[
N_P(\lambda)= \sum_{t \in \sigma(P), \;t\leq \lambda} 1,
\]
 we have
\[
N_P(\lambda)\sim \sum_{j=1}^p \frac{A_j}{(j-1)!} \left(\frac{d}{ds}\right)^{j-1}\left(\frac{\lambda^s}{s}\right)|_{s=z_0}+ O(\lambda^{z_0-\delta}), \quad \lambda \to \infty,
\]
for a certain $\delta >0$.
\end{theorem}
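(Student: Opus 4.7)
The plan is to apply a Mellin--Perron formula to recover $N_P$ directly from $\zeta(P,z)$ and then to shift the contour of integration past the leftmost pole at $z = -z_0$, picking up residues to produce the announced asymptotic expansion. Concretely, with the substitution $s = -z$ the hypothesis translates into the local Laurent expansion
\[
\zeta(P,-s) = \sum_{j=1}^p \frac{A_j}{(s-z_0)^j} + h(s), \qquad h \text{ holomorphic near } s = z_0,
\]
and the Mellin--Stieltjes representation of the counting function gives, for any $c > z_0$,
\[
N_P(\lambda) = \frac{1}{2\pi i}\int_{c-i\infty}^{c+i\infty} \zeta(P,-s)\, \frac{\lambda^s}{s}\, ds.
\]
A routine residue calculation at $s = z_0$ yields
\[
\operatorname{Res}_{s = z_0} \zeta(P,-s)\frac{\lambda^s}{s} = \sum_{j=1}^p \frac{A_j}{(j-1)!}\left(\frac{d}{ds}\right)^{j-1}\left.\frac{\lambda^s}{s}\right|_{s = z_0},
\]
which is exactly the main term in the statement.

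First I would make the Perron formula rigorous: since $P$ is positive selfadjoint and elliptic, its resolvent is compact by Theorem \ref{thmpara}, hence the spectrum is discrete with finite multiplicities and the Dirichlet series $\sum \lambda_j^{-s}$ is absolutely convergent for $\Re(s) > z_0$. A truncated version of the integral on $[c-iT,\, c+iT]$ is then valid, with an error controlled by $T$ together with the growth of $\zeta(P,-s)$ on vertical lines. Next I would deform the contour to the vertical line $\Re(s) = z_0 - \delta$ for some small $\delta > 0$: the integral of the holomorphic remainder $h(s)\lambda^s/s$ along this shifted line is bounded by a constant times $\lambda^{z_0 - \delta}$, giving the $O(\lambda^{z_0 - \delta})$ error term.

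The main obstacle is the vertical decay of $\zeta(P,-s)$, which is not automatically strong enough to make either the truncation error or the shifted integral converge uniformly in $\lambda$. This is precisely the additional Aramaki hypothesis on $\Gamma(z)\zeta(P,z)$ pointed out in the footnote. To verify it one exploits the heat-trace identity
\[
\Gamma(s)\zeta(P,-s) = \int_0^\infty t^{s-1}\,\tr(e^{-tP})\, dt,
\]
together with the rapid decay of $\Gamma(s)$ on vertical strips and the sharp heat-trace bounds from \cite{GS96,MSS06b}. Once polynomial (or faster) decay of $\zeta(P,-s)$ in $|\Im s|$ is established on a strip to the left of $\Re(s) = z_0$, a standard Phragm\'en--Lindel\"of plus truncation argument closes the proof and yields the claimed remainder for suitable $\delta > 0$.
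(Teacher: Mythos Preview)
The paper does not prove this theorem at all: it is quoted from Aramaki's paper \cite{AR88} as a known Tauberian result and used as a black box. There is therefore no in-paper argument to compare your proposal against.

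As for the proposal itself, your outline is the standard Mellin--Perron/contour-shift route to Ikehara-type Tauberian theorems and is broadly correct in spirit. Two points are worth flagging. First, the raw Perron formula does not quite give $N_P(\lambda)$ on the nose but a weighted or smoothed version (or else one must carry an explicit truncation error in $T$); Aramaki's own argument proceeds via a smoothed counting function and then passes to $N_P$ by a monotonicity/Tauberian step, which is where positivity of $P$ is actually used. Your sketch glosses over this de-smoothing step. Second, the vertical-decay requirement is indeed the crux, and you correctly identify that it comes from the heat-trace Mellin identity together with the exponential decay of $\Gamma$ on vertical lines; in the bisingular setting this is exactly what the footnote's references \cite{GS96,MSS06b} supply. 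With those two caveats addressed, your strategy would reproduce Aramaki's conclusion.
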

\begin{theorem}
\label{asint}
Let $A \in L^{m_1, m_2}(M_1 \times M_2)$ be a positive  selfadjoint bisingular 
satisfying Assumptions \ref{assu}, then
\begin{equation}
\label{asbe}
N_A(\lambda)\sim\left\{\begin{array}{ll}
C_1 \lambda^l\log(\lambda)+ C'_1 \lambda^l+ O(\lambda^{l-\delta_1})& \mbox{for } \frac{n_1}{m_1}=\frac{n_2}{m_2}=l\\
 C_2 \lambda^{\frac{n_2}{m_2}}+O(\lambda^{\frac{n_2}{m_2}-\delta_2}) & \mbox{for } \frac{n_2}{m_2}>\frac{n_1}{m_1}\\
 C_3 \lambda^{\frac{n_1}{m_1}}+O(\lambda^{\frac{n_2}{m_2}-\delta_2}) & \mbox{for }\frac{n_2}{m_2}<\frac{n_1}{m_1}
\end{array}\right.
,\quad \lambda \to \infty,
\end{equation}
for certain $\delta_i>0$, $i=1,2,3$. The constants $C_1, C_1', C_2, C_3$  depend only on the principal symbol of $A$.
\end{theorem}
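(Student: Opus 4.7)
The plan is to derive \eqref{asbe} by applying Aramaki's Theorem \ref{aramaki} to $\zeta(A,z)$, feeding in the Laurent data already computed in the Corollary just above Section \ref{Weyl}. Since $A$ is positive, selfadjoint, and satisfies Assumptions \ref{assu}, Proposition \ref{zetaauto} gives $\zeta(A,z)=\sum_j \lambda_j^z$ on a left half-plane, and Theorem \ref{exthol} extends it meromorphically with a single pole at $z_{\mathrm{pole}}=\min\{-n_1/m_1,-n_2/m_2\}$, of order one when $n_1/m_1\neq n_2/m_2$ and of order two otherwise. The decay assumption on $\Gamma(z)\zeta(A,z)$ mentioned in the footnote to Theorem \ref{aramaki} will follow from the Mellin relation between the zeta function and the heat trace, as cited there.

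In the simple-pole cases the argument is immediate. When $n_2/m_2>n_1/m_1$, formula \eqref{m2>m1} supplies the residue of $\zeta(A,z)$ at $z=-n_2/m_2$, so Aramaki's formula with $p=1$ and $z_0=n_2/m_2$ produces $N_A(\lambda)\sim C_2 \lambda^{n_2/m_2}+O(\lambda^{n_2/m_2-\delta_2})$ with
\[
C_2=\frac{1}{(2\pi)^{n_1+n_2} n_2}\iint_{M_1\times M_2}\int_{\R^{n_1}}\int_{\mathbb{S}^{n_2-1}} a_{\cdot,m_2}^{-n_2/m_2}\, d\theta_2\, d\xi_1.
\]
The symmetric case $n_1/m_1>n_2/m_2$ is handled identically via \eqref{m1>m2}, producing $C_3$.

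The interesting case is $n_1/m_1=n_2/m_2=l$, where the pole has order two. By \eqref{m2=m12} and \eqref{m2=m11}, the Laurent expansion at $-l$ reads
\[
\zeta(A,z)=\frac{\mathrm{res}^2(A)}{(z+l)^2}+\frac{-TR_{1,2}(A)+TR_\theta(A)}{z+l}+h(z),
\]
with $h$ holomorphic near $-l$. Matching with Aramaki's sign convention, in which $\tfrac{d}{dz}\tfrac{1}{z+l}=-\tfrac{1}{(z+l)^2}$, forces $A_2=\mathrm{res}^2(A)$ and $A_1=TR_{1,2}(A)-TR_\theta(A)$. Since
\[
\frac{d}{ds}\!\left(\frac{\lambda^s}{s}\right)\bigg|_{s=l}=\frac{\lambda^l\log\lambda}{l}-\frac{\lambda^l}{l^2},
\]
Aramaki's formula with $p=2$ yields
\[
N_A(\lambda)\sim\frac{\mathrm{res}^2(A)}{l}\,\lambda^l\log\lambda+\left(\frac{TR_{1,2}(A)-TR_\theta(A)}{l}-\frac{\mathrm{res}^2(A)}{l^2}\right)\lambda^l+O(\lambda^{l-\delta_1}),
\]
which identifies $C_1$ and $C_1'$ explicitly.

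The main obstacle is the careful bookkeeping of signs between the Laurent coefficients of $\zeta(A,z)$ and Aramaki's coefficients $A_j$, together with checking that the remainder $O(\lambda^{z_0-\delta})$ is indeed uniform in the bisingular setting (which reduces to the decay of $\Gamma(z)\zeta(A,z)$ on vertical strips via the heat-trace argument mentioned above). That the constants depend only on the principal symbol is then automatic, since $\mathrm{res}^2(A)$, $TR_\theta(A)$, and $TR_{1,2}(A)$ are expressed in the Corollary as integrals of the three components $\{a_{m_1,\cdot},a_{\cdot,m_2},a_{m_1,m_2}\}$ of the bisingular principal symbol of $A$.
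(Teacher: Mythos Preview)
Your proposal is correct and follows exactly the same route as the paper: apply Aramaki's Theorem \ref{aramaki} to $\zeta(A,z)$ using the Laurent data supplied by Theorem \ref{exthol} and its Corollary, reading off $C_1,C_1',C_2,C_3$ from \eqref{m1>m2}--\eqref{angular}. Your explicit bookkeeping of the $A_j$'s and the evaluation of $\tfrac{d}{ds}(\lambda^s/s)$ is precisely what the paper compresses into ``as a simple application we get \eqref{asbe}'' before listing the constants in \eqref{risth}.
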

\begin{proof}
We use J. Aramaki's Theorem \ref{asint}, which gives the asymptotic of $N_A(\lambda)$ knowing the first left pole of the zeta function. 
As a simple application we get \eqref{asbe} with

\begin{equation}
\label{risth}
\begin{split}
C_1=& \frac{1}{(2\pi)^{n_1+ n_2} (n_1 \, m_2)} \iint_{M_1 \times M_2}\int_{\mathbb{S}^{n_1-1}}\int_{\mathbb{S}^{n_2-1}} (a_{m_1, m_2})^{-l}d\theta_1 d\theta_2\\
=& \frac{1}{(2\pi)^{n_1+ n_2} (n_2 \, m_1)}\iint_{M_1 \times M_2}\int_{\mathbb{S}^{n_1-1}}\int_{\mathbb{S}^{n_2-1}} (a_{m_1, m_2})^{-l}d\theta_1 d\theta_2;\\
C'_1=& \frac{TR_{1,2}(A)+ TR_{\theta}(A)}{l}-\frac{1}{n_1 n_2}\iint_{M_1 \times M_2}\int_{\mathbb{S}^{n_1-1}}\int_{\mathbb{S}^{n_2-1}} (a_{m_1, m_2})^{-l}d\theta_1 d\theta_2;\\
C_2=&\frac{1}{(2\pi)^{n_1+ n_2} n_2} \iint_{M_1 \times M_2}\int_{\R^{n_1}}\int_{\mathbb{S}^{n_2-1}}(a_{\cdot,m_2})^{-\frac{m_2}{n_2}} d\theta_2 d\xi_1; \\
C_3=& \frac{1}{(2\pi)^{n_1+ n_2} n_1} \iint_{M_1 \times M_2}\int_{\R^{n_2}}\int_{\mathbb{S}^{n_1-1}}(a_{m_1, \cdot})^{-\frac{n_1}{m_1}} d\theta_{1} d\xi_2.
\end{split}
\end{equation}
\end{proof}
\begin{remark}
In this paper we are focused just on bisingular operators with homogeneous principal symbol, since our aim 
is the study of the corresponding  Weyl's formulae.
 We do not introduce classical bisingular operators and we do not investigate the relationship between the poles of the $\zeta$-function and
 Wodzicki Residue defined in \cite{NR06}. Nevertheless, extending the results of section \ref{complexpowers} to classical bisingular operators,
 one can prove that, for a classical elliptic  bisingular operator $A \in L^{m_1, m_2}(M_1 \times M_2)$ that admits complex powers, 
\[
\mathrm{Wres}(A):= m_1 m_2 \lim_{z \to 1} (z-1)^2 \zeta(A,z),
\]
where $\mathrm{Wres}(A)$ is the bisingular Wodzicki residue defined by Nicola and Rodino in \cite{NR06}.
\end{remark}

\section{Examples}
\label{example}
First we consider the operator $A=-\Delta \otimes -\Delta$ on the torus $\mathbb{S}^1 \times \mathbb{S}^1$.
 We clearly have  $\sigma(A)=\{n^2m^2\}_{(n, m) \in \N^2}$. Hence the spectrum is countable and 
consists only of eigenvalues. The eigenvalue $\{0\}$ has an infinite dimensional eigenspace, while all other
 eigen\-spaces have dimension four.
Therefore we get
\begin{equation}
\label{count}
N_{A}(\lambda)= \sum_{0<n^2 \,m^2\leq \lambda} 4.
\end{equation}
Let us define the function $d(h): \N \to \N$ so that $d(h)$ is equal to the number of ways we can write $h=m \cdot n$,
 with $m, n$ natural positive numbers or, equivalently, it is equal to the number of divisors of $h$. This 
function is often called Dirichlet divisor function. By  a simple computation, we obtain
\begin{equation}
\label{dirla}
N_{A}(\lambda^2)=4 \,D(\lambda)= 4 \,\sum_{n\leq \lambda} d(n).
\end{equation}
Noticing that $\zeta(A)= 4 \zeta_R(2z)\zeta_R(2z)$, where $\zeta_R(z)$ is Riemann zeta-function,
 we can easily find the coefficients of the asymptotic expansion and  we have
\begin{equation}
\label{adir}
D(\lambda) \sim \lambda \log (\lambda)+ (2 \gamma-1)\lambda+ O(\lambda^{1-\delta}), \quad \lambda \to \infty,
\end{equation}
where 
\begin{equation}
\label{eumas}
\gamma:= \lim_{\tau \to \infty} \left[\sum_{i=1}^{[\tau]} \frac{1}{i}- \log \tau \right]
\end{equation}
is the well known Euler-Mascheroni constant. 
The asymptotic expansion \eqref{adir} is well known (see \cite{IV03} for an overview on Dirichlet divisor problem;
 see also \cite{IM88,LL94}). 
It is still an open question to understand the behavior of remainder. 
 In \cite{HA16}, G. H. Hardy
proved that $O(\lambda^\frac{1}{4})$ is a lower bound for the third term. The best approximation,
 found by M. Huxley in \cite{HU03}, is $O(\lambda^{c}(\log\lambda)^d)$, where
\[
c:= \frac{131}{416}\sim0,3149038462 \quad d:= \frac{18627}{8320}+1\sim 3,238822115.
\]
The conjecture is that the remainder is $O(\lambda^\frac{1}{4})$.

It is nevertheless interesting to investigate the link between Dirichlet divisor function and the above 
results on the spectral properties of a suitable operators. 
Let us notice that in \eqref{count} we have a slight abuse of notation, since  $N(\lambda)$ was only defined for positive operators. 
In this case $A=-\Delta \otimes -\Delta$ is non-negative, but has a non trivial kernel. In other words we actually consider 
\[
N_{A}:=N_{A \circ (\Id-P_{\ker\,A})}
\]
where $P_{\ker \,A}$ is the projection on the kernel of $A$. This definition is compatible with the definition of complex 
powers of non invertible operators in \cite{CSS03}. The variant of our theory to such a setting, which is possible,  will be not detailed here. Rather, 
let us now consider  the operator $A_c:=(-\Delta + c) \otimes (-\Delta +c)$, $c >0$, 
defined on the torus $\mathbb{S}^1 \times \mathbb{S}^1$.
 Clearly, $A_c$ satisfies Assumptions \ref{assu}; thus we can apply Theorem \ref{asint}. It is easy to see that the 
eigenvalues of $A_c$ are 
 $\{(n^2+ c) (m^2+c)\}_{(n, m) \in \N^2}$, each one with multiplicity four.  Hence
\[
\begin{split}
N(A_c; \lambda^2)&= 4 \;\sharp\{\mbox{ real numbers of the form } (n^2 +c) (m^2+c )\mid\\
 & (n^2 +c) (m^2+c )\leq \lambda, \; n, m \in \N \}= 4 \; D_c(\lambda).
\end{split}
\]
By Theorem \ref{expprincipal}, we know that $\sigma^{-1, -1}(A_c^{-\frac{1}{2}})= (\sigma^{2,2}(A_c))^{-\frac{1}{2}}$ so
the constant $C_1$ in \eqref{risth} can be easily evaluated
\begin{equation}
\label{cost1}
C_1= \frac{1}{2} \frac{1}{(2\pi)^2} (2 \pi)^2 \; 4= 2.
\end{equation}
Since in this case we know the eigenvalue of the operator,  $TR(A_c)$ turns into
\begin{equation}
\label{cost2}
\begin{split}
TR_{1,2}(A_c)&=2 \lim_{\tau \to \infty} \left[\sum_{i=-[\tau]}^{[\tau]} 
\frac{1}{(c+ i^2)^{\frac{1}{2}}} - 2 \log \tau  \right]\\
&= 4 \lim_{\tau \to \infty} \left[\sum_{i=0}^{[\tau]} \frac{1}{(c+ i^2)^{\frac{1}{2}}}- \log \tau \right]= 4 \gamma_c.
\end{split}
\end{equation}
We have named this constant $\gamma_c$ because of the link with the usual constant of Euler-Mascheroni $\gamma$ 
in \eqref{eumas}. 
Notice that,
 letting $c$ tend to $0$, $\gamma_c$ goes to $+ \infty$; while, if $c$ tends to infinity, $\gamma_c$ goes to $-\infty$.
Finally, we obtain
\begin{equation}
\label{asfin}
\begin{split}
D_c(\lambda)&=  \frac{1}{4}N(A_c; \lambda^2)\\
 &\sim \lambda \log (\lambda)+ (2 \gamma_c-1)\lambda+ O(\lambda^{1- \delta}), 
 \quad  \lambda \to \infty.
\end{split}
\end{equation}
In this case, knowing exactly the eigenvalues of the operator, we can check our estimate with 
a numerical experiment. We have checked \eqref{asfin} for $D_c(\lambda)$ 
 with $\lambda=10.000.000$.
In the second column of the Table \ref{tab:1} there is the estimate of the coefficient of first term of the asymptotic
expansion obtained with the software Maple 15, in the third the coefficient  obtained by \eqref{asfin}, and in the fourth the error.
\begin{table}
\caption{1st. term approximation \label{tab:1}}
\begin{tabular}{c||c|c|c|}
c &  1st. term with Maple&  1st. term in  \eqref{asfin}& error \\ \hline
2& 1,024846785 & 1  & 0,024846785 \\ \hline
3 &0,9916281891  & 1 & 0,008371811     \\ \hline
4 & 0,968979304	& 1 & 0,031020696 \\ \hline
5 & 0,951859819	& 1  &0,048140181  \\ \hline
6 & 0,938130598	& 1  & 0,061869402 \\ \hline
7 &0,926687949 &	1 & 0,073312051 \\ \hline
8 & 0,916888721	& 1 & 0,083111279 \\ \hline
9 & 0,908326599	& 1 & 0,091673401 \\ \hline
10 & 0,900728511	& 1  & 0,099271489  \\ \hline
11 & 0,893902326	& 1  & 0,106097674 \\ \hline
12 &0,887707593	& 1  & 0,112292407 \\ \hline
13 &0,882038865	& 1   & 0,117961135  \\ \hline
14 &0,876815128	& 1  & 0,123184872  \\ \hline
15 &0,871972341 &	1 &0,128027659 \\ \hline
16 & 0,867459966 & 	1 & 0,132540034   \\ \hline
17 & 0,863235614	& 1  &0,136764386 \\ \hline
18 &0,859265437 &	1  & 0,140734563 \\ \hline
19 & 0,855520776	& 1 & 0,144479224  \\ \hline
20 & 0,851977951 &	1 &0,148022049
\end{tabular}
\end{table}
We can notice that the error increases with $c$. This is not surprising, since
 \eqref{cost1} does not depend on $c$. In order to make 
 the error smaller, we should increase the number of digits at which we truncate the series $D_c(\lambda)$.
In Table \ref{tab:2} we analyze the coefficient of the second term. 
\begin{table}
\caption{2nd. term approximation \label{tab:2}}
\begin{tabular}{c|| c| c| c|}
c &  2nd. term with Maple&  2nd. term in \eqref{asfin}& error \\ \hline
2& 0,40048285  & 0,401484386 & 0,001001536 \\ \hline
3  & -0,13493765 & -0, 1339381238& 0,000999526 \\ \hline
4 &-0,499994550  & -0,498993281& 0,001001269 \\ \hline
5  &-0,775928050  & -0,774926584 & 0,001001466 \\ \hline
6 &-0,997216950 & -0,996213733 & 0,001003217 \\ \hline
7 &-1,181650650&  -1,180647904 & 0,001002746 \\ \hline
8  &-1,339595550 & -1,3385899520 & 0,001005598 \\ \hline
9  &-1,477600650 &  -1,476592538 & 0,001008112 \\ \hline
10 &-1,600067350 &  -1,599058126 & 0,001009224 \\ \hline
11 &-1,710092450 & -1,7090842470& 0,001008203 \\ \hline
12 &-1,809939750 &-1,808931287 & 0,001008463 \\ \hline
13  &-1,901308850 &-1,9002985710 & 0,001010279  \\ \hline
14 &-1,985505550 & -1,9844949070 & 0,001010643 \\ \hline
15 &-2,063562050 & -2,0625496430 & 0,001012407 \\ \hline
16  &-2,136292950 & -2,1352865400 & 0,001006410 \\ \hline
17 &-2,204381450 &  -2,2033750580 & 0,001006392 \\ \hline
18  &-2,268373150 & -2,2673662890 & 0,001006861 \\ \hline
19 &-2,328729950 & -2,3277195600 & 0,001010390 \\ \hline
20  &-2,385833550 &-2,3848212840 & 0,001012266
\end{tabular}
\end{table}
In this case the error is essentially independent of $c$, this is due to the fact that 
\eqref{cost2} does depend on $c$.

Our spectral approach to Dirichlet Divisor function suggests that others
 \emph{Weyl's formula techniques} (e. g. Fourier Integral 
Operator) could be useful to attack the Dirichlet Divisor conjecture.  

\begin{acknowledgements}

The author wishes to thank Professor L. Rodino for suggesting this topic and Professors
 S. Coriasco, T. Gramchev, F. Nicola and S. Pilipovi\'c for helpful discussions and comments. 
The author wishes to thank also
an anonymous referee for useful suggestions aimed to improve the manuscript.
\end{acknowledgements}

\bibliographystyle{spmpsci}  

\def\cftil#1{\ifmmode\setbox7\hbox{$\accent"5E#1$}\else
  \setbox7\hbox{\accent"5E#1}\penalty 10000\relax\fi\raise 1\ht7
  \hbox{\lower1.15ex\hbox to 1\wd7{\hss\accent"7E\hss}}\penalty 10000
  \hskip-1\wd7\penalty 10000\box7}

\end{document}